\newtheorem{theorem}{Theorem}[section]
\newtheorem{lemma}[theorem]{Lemma}
\newtheorem{proposition}[theorem]{Proposition}
\newtheorem{corollary}[theorem]{Corollary}
\newtheorem{example}[theorem]{Example}
\newtheorem{problem}[theorem]{Problem}
\newcommand\aone{A_{1+}^{-1}}
\newcommand\atwo{A_{2+}^{-1}}
\newcommand{\aplumi}{A_+^{-1}}
\newcommand{\semino}{A_{+}}
\newcommand{\semione}{A_{1+}}
\newcommand{\semitwo}{A_{2+}}
\newcommand{\effone}{E(A_1)}
\newcommand{\efftwo}{E(A_2)}
\title[]{
Non-linear characterization of Jordan $*$-isomorphisms via 
maps on positive cones of $C^*$-algebras
}
\author{
Osamu~Hatori
}
\address{
Institute of Science and Technology,
Niigata University, Niigata 950-2181, Japan.
}
\email{hatori@math.sc.niigata-u.ac.jp
}
\author{
Shiho Oi
}
\address{
Department of Mathematics, Faculty of Science, 
Niigata University, Niigata 950-2181, Japan.
}
\email{shiho-oi@math.sc.niigata-u.ac.jp
}
\keywords{$C^*$-algebras, positive cones, norms, isometries, central elements, invertible elements, spectra, spectral norms, Jordan $*$-isomorphisms, preserver problems}
\subjclass[2020]{46L05,47B49,47B65}
\begin{document}

\maketitle\textbf{}

\begin{abstract}
We study maps between positive definite or positive semidefinite cones of unital $C^*$-algebras. We describe 
surjective maps that preserve
\begin{itemize}
    \item[(1)] the norm of the quotient or multiplication of elements;
    \item[(2)] the spectrum of the quotient or multiplication of elements;
    \item[(3)] the spectral seminorm of the quotient or multiplication of elements.
\end{itemize}
These maps relate to the Jordan $*$-isomorphisms between the specified $C^*$-algebras. While a surjection between positive definite cones that preserves the norm of the quotient of elements may not be extended to a linear map between the underlying $C^*$-algebras, the other types of surjections can be extended to a Jordan $*$-isomorphism or a Jordan $*$-isomorphism followed by the implementation by a positive invertible element. We also study conditions for the centrality of positive invertible elements. 
We generalize "the corollary" regarding surjections between positive semidefinite cones of unital $C^*$-algebras. Applying it, 
we provide positive solutions to the problem posed by Moln\'ar 
for general unital $C^*$-algebras.
\end{abstract}

\maketitle

\section{Introduction}
A Jordan $*$-isomorphism 
plays a crucial role in the study of mappings 
between $C^*$-algebras. It preserves various quantities, sets, and structures of $C^*$-algebras. Conversely, we can characterize a Jordan $*$-isomorphism by its preservation of specific quantities, sets, and structures. Kadison's theorem \cite{kad} is a celebrated result that states that a surjective complex linear unital isometry between unital $C^*$-algebras is a Jordan $*$-isomorphism. Kadison also showed that a unital order isomorphism is a Jordan $*$-isomorphism \cite{kad2}. Therefore, if a unital complex linear bijection preserves positive elements, then it is a Jordan $*$-isomorphism. If a surjective complex linear map preserves the unitary group, it is a Jordan $*$-isomorphism multiplied by a unitary \cite{rd}. 

On the other hand, Moln\'ar \cite{molmul} initiated the study of the so-called multiplicatively spectrum-preserving maps. A map $T\colon B_1\to B_2$ between Banach algebras $B_1$ and $B_2$ is called multiplicatively spectrum-preserving if $\sigma(T(a)T(b))=\sigma(ab)$ for $a,b\in B_1$. He presented non-linear characterizations of automorphisms on algebras of functions and operators. Several further investigations were obtained in this way. 
We mention only a few of the corresponding works
 \cite{rr2,rr,hmt,hmt2,lt,tl,hs,hllmty} on commutative Banach algebras and non-commutative Banach algebras \cite{cls,hlw,mho,bms,aa,moljot,ht,dang,tbs}.

In this paper, we study non-linear characterizations of Jordan $*$-isomorphisms on positive (definite or semidefinite) cones of $C^*$-algebras. Here, "non-linear" means that we do not assume that the maps we are considering are linear in any sense. 
We investigate surjective maps that preserve the norm, the spectral seminorm, and the spectrum of the multiplication as well as the quotient of elements between positive definite cones of $C^*$-algebras. 
The positive (definite or semidefinite) cone has a vibrant structure from algebraic and geometrical points of view. It has wide-ranging applications in various areas of mathematics and mathematical physics. 

Moln\'ar's fascinating lectures \cite{mollec} inspired our research on preserver problems related to algebras of operators and functions. During these lectures, he presented his and his colleague's findings about preservers of the norm of the arithmetic mean and the geometric mean of positive invertible elements in a unital $C^*$-algebra (cf. \cite{mol,cmm,dongli}). He presented non-linear characterizations of Jordan $*$-isomorphisms. He also discussed these maps during the lectures. All these findings inspired us to study surjective maps between positive (definite or semidefinite) cones of $C^*$-algebras.

We will start by clarifying the notation and introducing the necessary definitions and properties that we will use throughout the paper. 
In this paper, we use $A$, $A_1$, and $A_2$ to denote unital $C^*$-algebras. We always write the unit by $e$. 
We denote 
\[
A_{SA}=\{a\in A\colon a=a^*\},
\]
the Jordan algebra of all self-adjoint elements in $A$. 
The positive semidefinite cone is
\[
\semino=\{a\in A_{SA}\colon a\ge 0\},
\]
and the positive definite cone is
\[
\aplumi=\{a\in \semino\colon \text{$a$ is invertible in $A$}\}.
\]
The spectrum of $a\in A$ is denoted by $\sigma(a)$, and the spectral seminorm on $A$ is 
\[
\|a\|_S=\sup\{|t|\colon t\in \sigma(a)\}.
\]
It is worth noting that $\|a\|=\|a\|_S$ for a self-adjoint element $a$, in particular, $a\in A_+^{-1}$.
A Jordan $*$-isomorphism is a complex linear bijective map between two unital $C^*$-algebras that preserves the Jordan product ($(ab+ba)/2$ for $a,b$) and the involution. 
 It preserves squaring,  
 the unit, and the invertibility. It also preserves commutativity. The same applies for the spectrum. Although a Jordan $*$-isomorphism may not preserve multiplication, it is known to preserve the triple product: that is, $J(aba)=J(a)J(b)J(a)$, for $a,b$ in the domain.   
It is worth noting that a Jordan $*$-isomorphism preserves the norm. Kadison's theorem is a celebrated result stating that if a complex linear unital bijection preserves the norm, it is a Jordan $*$-isomorphism \cite[Theorem 7]{kad}. 
A complex linear unital bijection is a Jordan $*$-isomorphism if it preserves the order in both directions, as stated in \cite[Corollary 5]{kad2}.


Moln\'ar presented the following striking result in \cite{mol}. Let $\phi\colon \aone\to \atwo$ be a map. We say that $\phi$ is positive homogeneous if the condition $\phi(ta)=t\phi(a)$ holds for all $a\in \aone$ and real numbers $t>0$. 
We say that $\phi$ is order-preserving in both directions provided that $a\le b$ if and only if $\phi(a)\le \phi(b)$ for any $a,b\in \aone$.
We say that $\phi$ is an order isomorphism if  $\phi$ is a bijection and order-preserving in both directions.   
It is worth noting that if $\phi$ is a surjection and order-preserving in both directions, then $\phi$ is also an injection. 
(In fact, if $\phi(x)=\phi(x')$, then $\phi(x)\le \phi(x')$ and $\phi(x')\le \phi(x)$. Hence
we have $x\le x'$ and $x'\le x$ simultaneously. Thus, $x=x'$ means $\phi$ is an injection.) Thus, $\phi$ is a bijection, hence an order isomorphism. 

\begin{theorem}[Proposition 13 in \cite{mol}]\label{thecor}
Suppose that $\phi\colon \aone\to \atwo$ is a positive homogeneous order isomorphism. Then there exists a Jordan $*$-isomorphism $J\colon A_1\to A_2$ such that $\phi=\phi(e)^\frac12J\phi(e)^\frac12$ on $\aone$.
\end{theorem}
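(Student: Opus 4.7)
My plan is to first normalize so that $\phi$ fixes the unit, then to prove that the normalized $\phi$ is the restriction of a linear map, and finally to invoke Kadison's order-isomorphism theorem.

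For the normalization, I set
\[
\psi(a)=\phi(e)^{-1/2}\phi(a)\phi(e)^{-1/2},\qquad a\in \aone.
\]
Conjugation by the positive invertible element $\phi(e)^{-1/2}$ is a bijection of $\atwo$ onto itself that preserves the L\"owner order in both directions and commutes with positive scalar multiplication, so $\psi$ is again a positive homogeneous order isomorphism, now with the additional property $\psi(e)=e$. If I can produce a Jordan $*$-isomorphism $J\colon A_1\to A_2$ with $J|_{\aone}=\psi$, then
\[
\phi(a)=\phi(e)^{1/2}J(a)\phi(e)^{1/2}\qquad(a\in\aone),
\]
as required. Hence I may assume from now on that $\phi(e)=e$.

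The heart of the argument is to show, under this normalization, that $\phi$ is additive:
\[
\phi(a+b)=\phi(a)+\phi(b)\qquad (a,b\in \aone).
\]
Positive homogeneity combined with order preservation yields several invariants to exploit: from $\phi(te)=te$ ($t>0$) and the order characterizations $\|a\|=\inf\{t>0:a\le te\}$ and $\min\sigma(a)=\sup\{t>0:te\le a\}$, it follows that $\phi$ preserves both the operator norm and the smallest spectral value on $\aone$; more generally, $\phi$ preserves the asymmetric quantity $M(a,b)=\inf\{t>0:a\le tb\}$, the building block of the Thompson metric. I expect this additivity step to be the main obstacle. My strategy is to use these invariants to characterize the sum $a+b$ in purely order-theoretic and scaling terms---for instance, by first establishing the easier identity $\phi(a+se)=\phi(a)+se$ for $a\in\aone$ and $s>0$ (which already suffices to well-define the extension below) and then bootstrapping to full additivity.

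With additivity on $\aone$ in hand, I extend $\phi$ to the self-adjoint part $A_{1,SA}$ by
\[
J(a):=\phi(a+te)-te\qquad\text{for any }t>\|a\|_S,
\]
the choice of $t$ being irrelevant thanks to additivity. Positive homogeneity and additivity on $\aone$ then propagate to real linearity of $J$ on $A_{1,SA}$, and I extend complex linearly via $J(a+ib):=J(a)+iJ(b)$ for $a,b\in A_{1,SA}$. By construction $J$ is a unital complex linear bijection from $A_1$ onto $A_2$, and the order preservation of $\phi$ in both directions carries over to $J$ on the self-adjoint parts. Kadison's theorem \cite[Corollary 5]{kad2}, which identifies unital complex linear order isomorphisms between unital $C^*$-algebras with Jordan $*$-isomorphisms, then completes the proof.
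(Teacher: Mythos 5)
Your overall architecture (normalize by $\phi(e)^{-1/2}$, prove additivity of the unital map, extend to a linear map, invoke Kadison's order-isomorphism theorem) is a reasonable plan, and the normalization and extension steps are fine as far as they go. But there is a genuine gap at exactly the point you identify as "the heart of the argument": you never actually prove that a unital positive homogeneous order isomorphism $\psi\colon\aone\to\atwo$ is additive. You only state a strategy ("characterize the sum in order-theoretic and scaling terms", "bootstrapping to full additivity"). This is not a minor omission: additivity of such a map is essentially the entire content of the theorem. The invariants you list --- preservation of $\|\cdot\|$, of $\min\sigma(\cdot)$, and of $M(a,b)=\inf\{t>0: a\le tb\}$ --- do not obviously determine $a+b$ from $a$ and $b$ in order-theoretic terms; the positive definite cone is not a lattice, and there is no evident first-order characterization of the sum using only $\le$ and scalar multiplication. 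Even your "easier identity" $\psi(a+se)=\psi(a)+se$ is not established (and it is not clear it is easier). Without additivity, the map $J$ is not well defined, is not linear, and Kadison's theorem cannot be applied, so the proof as written does not close.

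For comparison, the paper does not attack additivity directly. It observes that a positive homogeneous order isomorphism preserves the quantities $\inf\{t: a\le tb\}$ and $\inf\{s: b\le sa\}$, hence is a surjective Thompson isometry, and then invokes the structure theorem of Hatori--Moln\'ar \cite[Theorem 9]{hm}, which yields $\phi(x)=\phi(e)^{1/2}\bigl(PJ(x)+(e-P)J(x^{-1})\bigr)\phi(e)^{1/2}$ for a Jordan $*$-isomorphism $J$ and a central projection $P$; positive homogeneity (tested on $x=\tfrac12 e$) then forces $P=e$. All the analytic difficulty is absorbed into that structure theorem, which itself ultimately rests on Kadison's isometry theorem. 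If you want to pursue your route, you would need to supply an actual proof of additivity --- which in effect amounts to reproving Moln\'ar's Proposition 13 or the Thompson-isometry theorem --- or else reduce to one of those known results as the paper does.
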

After examining the definition of the Thompson metric, we can conclude that if $\phi$ is a positive homogeneous order isomorphism, it is also a Thompson isometry. By utilizing \cite[Theorem 9]{hm} and carefully considering the matter at hand, we can prove Theorem \ref{thecor}. 
Theorem \ref{thecor} is highly applicable in many situations, including in this paper. Furthermore, it should be mentioned that \cite[Theorem 9]{hm} is proven by utilizing a profound result of Kadison \cite[Theorem 7]{kad} mentioned above. It is worth pointing out that there is a typo in \cite[Theorem 13]{mol}, where $\phi\colon {\mathscr{A}}\to {\mathscr{B}}$ should read $\phi\colon {\mathscr{A}}_+^{-1}\to {\mathscr{B}}_+^{-1}$.
Moln\'ar refers to Theorem \ref{thecor} "the corollary" in his lectures \cite{mollec} because several interesting results follow from it. We also repeatedly apply this valid "the corollary" in this paper.

In section \ref{s2}, we present characterizations of bijections between positive definite cones that preserve the norm of the quotients of positive elements (Theorem \ref{qim}). This bijection is related to a Jordan $*$-isomorphism as usual. However, it may not be extended to a linear map. We also provide a condition pertaining to centrality that allows the underlying bijection to be extended to a Jordan $*$-isomorphism. Theorem \ref{hoo} is about a bijection that preserves the spectrum or spectral seminorm of the quotient of positive invertible elements. This bijection is extended to a Jordan $*$-isomorphism followed by the implementation of a positive invertible element. 

In section  \ref{s3}, we are concerned with multiplicatively norm or spectrum-preserving bijections. 
As described in the second paragraph of this section, Moln\'ar \cite{molmul} initiated the study of multiplicatively spectrum-preserving maps between algebras of operators or functions. We study multiplicatively norm-preserving maps between positive definite cones of $C^*$-algebras (Theorem \ref{3.6}). 
We are also concerned with maps that preserve the norm of the triple product.

In section \ref{central}, we investigate the conditions of centrality for positive invertible elements. Ogasawara \cite{oga} proved that a $C^*$-algebra is commutative if and only if squaring preserves the order for all positive semidefinite elements. Moln\'ar introduced local monotonicity \cite{molmono}. 
A function $f$ is said to be locally monotone at a self-adjoint element $a$ if $a\le x$ implies $f(a)\le f(x)$ for every self-adjoint element $x$. 
He proved that a self-adjoint element $a$ is central if and only if the exponential function is locally monotone at $a$. 
Finally, Nagy \cite{nagy} succeeded in proving that every strictly convex increasing function defined on an open interval unbounded from above is locally monotone at $a$ if and only if it is central. Prior to that, Virosztek \cite{virosz} had proved the corresponding result for a large class of functions. 
The proofs by Nagy and Virosztek are universally applicable to a broad category of functions, encompassing the squaring function. However, it requires a rather extensive computation. It is noteworthy to present a proof specifically for the squaring function only if it is sufficiently straightforward.
We present a concise and straightforward direct proof establishing that the squaring function is locally monotone at a positive invertible element if and only if it is central (Proposition \ref{lo}). 
Through its application, we illustrate specific norm conditions that are sufficient for the centrality of positive invertible elements.

In section 5, we study maps between positive semidefinite cones. We prove "the corollary" for maps between semidefinite cones (Theorem \ref{exthe}). Applying Theorem \ref{exthe}, we exhibit similar results of section 4 for the case of positive semidefinite cones. In particular, we show a generalization of Theorem 2.6 and Corollary 2.9 in \cite{moljot} for the case of general unital $C^*$-algebras. Our results provide positive solutions to the problem posed by Moln\'ar \cite[p.194]{moljot}.

\section{Maps that preserve the spectrum or norm of the quotients of elements in positive definite cones}\label{s2}
We aim to prove that the map $x\mapsto (ax^2a)^\frac12a^{-1}$ on $A_+^{-1}$ preserves the norm in Proposition \ref{gyoe}, for any $a\in A_+^{-1}$. We can extend this map to a linear map on $A$ if and only if $a$ is central in $A$ (Proposition \ref{ax2a}). Using this approach, we prove the main result of this section, Theorem \ref{qim}. We observe that a bijection between positive definite cones, which preserves the norm of the quotient of any two elements, is related to a Jordan $*$-isomorphism between the whole algebras. Nevertheless, this bijection generally cannot be extended to a linear map between entire algebras.

In the first draft of the paper, we apply the Gelfand-Naimark theorem to prove the following proposition. However, Lajos Moln\'ar pointed out that direct proof is possible. This adjustment not only streamlines the proof but also enhances the clarity and accessibility of our paper.
\begin{proposition}\label{gyoe}
    For any trio $a,x, y\in A_+^{-1}$ we have
    \[
    \|(ax^2a)^\frac12a^{-1}\|=\|x\|
    \]
    and
    \[
    \|(ax^2a)^\frac12(ay^2a)^{-\frac12}\|=\|x y^{-1}\|.
    \]
\end{proposition}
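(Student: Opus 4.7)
The plan is to reduce each identity to a direct application of the $C^*$-identity $\|b\|^2=\|b^*b\|$, together with the standard fact that the spectral seminorm satisfies $\|uv\|_S=\|vu\|_S$ for all $u,v$ in a unital Banach algebra (which follows from $\sigma(uv)\cup\{0\}=\sigma(vu)\cup\{0\}$).

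For the first identity, I would set $b=(ax^2a)^{1/2}a^{-1}$. Since $a^{-1}$ and the positive square root $(ax^2a)^{1/2}$ are both self-adjoint, $b^*=a^{-1}(ax^2a)^{1/2}$, and a short computation gives
\[
b^*b=a^{-1}(ax^2a)^{1/2}(ax^2a)^{1/2}a^{-1}=a^{-1}(ax^2a)a^{-1}=x^2.
\]
The $C^*$-identity then yields $\|b\|^2=\|x^2\|=\|x\|^2$, hence the first claim.

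For the second identity, set $c=(ax^2a)^{1/2}(ay^2a)^{-1/2}$. The analogous manipulation produces
\[
c^*c=(ay^2a)^{-1/2}(ax^2a)(ay^2a)^{-1/2},
\]
which is a positive element, so its norm coincides with its spectral seminorm. Applying $\|uv\|_S=\|vu\|_S$ repeatedly, I would reduce this successively to the spectral seminorm of $(ax^2a)(ay^2a)^{-1}=ax^2y^{-2}a^{-1}$, then of $x^2y^{-2}$, and finally of $xy^{-2}x$. Since $xy^{-2}x=(y^{-1}x)^*(y^{-1}x)$ is positive, the $C^*$-identity together with the isometry of the involution gives $\|xy^{-2}x\|=\|y^{-1}x\|^2=\|xy^{-1}\|^2$, and the second identity follows.

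The first identity is in fact the $y=e$ instance of the second, but stating it separately is convenient. The only real obstacle is careful bookkeeping: tracking which factors are self-adjoint so that the involution distributes correctly, and keeping the order of factors straight through the cyclic shuffles in the spectral-seminorm computation. I do not anticipate any conceptual difficulty beyond this.
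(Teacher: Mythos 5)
Your argument is correct. For the first identity it is verbatim the paper's proof: compute $b^*b=x^2$ and invoke the $C^*$-identity. For the second identity you and the paper start the same way, reducing to $c^*c=(ay^2a)^{-1/2}(ax^2a)(ay^2a)^{-1/2}$, but then diverge in the middle step: the paper writes this positive element as $d^*d$ with $d=xa(ay^2a)^{-1/2}$ and applies the $C^*$-identity a second time in the form $\|d^*d\|=\|d\|^2=\|dd^*\|$, landing directly on $\|xa(ay^2a)^{-1}ax\|=\|xy^{-2}x\|$; you instead pass to the spectral seminorm and use the cyclic invariance $\|uv\|_S=\|vu\|_S$ (Jacobson's lemma) to shuffle the factors into $\|xy^{-2}x\|_S$. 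Both routes are valid and end with the same computation $\|xy^{-2}x\|=\|y^{-1}x\|^2=\|xy^{-1}\|^2$; the paper's version is marginally more self-contained in that it uses nothing beyond the $C^*$-identity, whereas yours imports the (standard) spectral-radius facts, but this costs nothing and your bookkeeping of which elements are self-adjoint or positive is accurate throughout.
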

\begin{proof}
We have 
\begin{multline*}
\|(ax^2a)^\frac12a^{-1}\|^2=\|((ax^2a)^\frac12a^{-1})^*((ax^2a)^\frac12a^{-1})\|
\\
=\|a^{-1}(ax^2a)a^{-1}\|=\|x^2\|=\|x\|^2.
\end{multline*}
Thus the first equation holds. For the second equation, we have
\begin{multline*}
    \|(ax^2a)^\frac12(ay^2a)^{-\frac12}\|^2
=
\|((ax^2a)^\frac12(ay^2a)^{-\frac12})^*((ax^2a)^\frac12(ay^2a)^{-\frac12})\|
\\=
\|(ay^2a)^{-\frac12}ax^2a(ay^2a)^{-\frac12}\|
=
\|xa(ay^2a)^{-\frac12}\|^2
\\
=
\|(xa(ay^2a)^{-\frac12})(xa(ay^2a)^{-\frac12})^*\|
\\
=
\|xa(ay^2a)^{-1}ax\|=\|xy^{-2}x\|=\|xy^{-1}\|^2.
\end{multline*}
\end{proof}
To prove the main result of this section, Theorem \ref{qim}, we will utilize the following proposition.
\begin{proposition}\label{ax2a}
Let $a\in A_+^{-1}$. 
    Suppose that $\phi\colon A_+^{-1}\to A_+^{-1}$ is defined as $\phi(x)=(ax^2a)^\frac12$, $x\in A_+^{-1}$. 
    Then $\phi$ is additive if and only if $a$ is a central element in $A$. In this case, 
     $\phi(x)=ax$, $x\in A_+^{-1}$. 
\end{proposition}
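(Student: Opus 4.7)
The easy direction is essentially tautological: when $a$ is central, $a$ and any $x\in\aplumi$ commute, so $ax=a^{1/2}xa^{1/2}$ is self-adjoint and positive, and $(ax)^2=a^2x^2=ax^2a$; hence $\phi(x)=(ax^2a)^{1/2}=ax$ by uniqueness of the positive square root, and $x\mapsto ax$ is manifestly additive on $\aplumi$.

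For the converse the plan is to extract an explicit formula for $\phi$ from a single well-chosen instance of additivity. Note $\phi(e)=a$ and $\phi(a^{-1})=(a\cdot a^{-2}\cdot a)^{1/2}=e$. Given $b\in\aplumi$, the element $a^{-1}+b$ lies in $\aplumi$, so additivity gives $\phi(a^{-1}+b)=e+\phi(b)$. Squaring this identity and using the definition $\phi(y)^2=ay^2a$ together with the elementary expansion $a(a^{-1}+b)^2a = e+(ab+ba)+ab^2a$ produces
\[
e+2\phi(b)+ab^2a = e+(ab+ba)+ab^2a.
\]
Cancellation yields the explicit formula $\phi(b)=\tfrac12(ab+ba)$ for every $b\in\aplumi$, and comparing with $\phi(b)^2=ab^2a$ gives the key polynomial identity $(ab+ba)^2=4\,ab^2a$ valid on all of $\aplumi$.

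To pass from this identity to the centrality of $a$, I would substitute $b=e+sk$ with $k\in A_{SA}$ and $|s|$ small enough that $b\in\aplumi$. Both sides become polynomials in $s$, and comparing the coefficient of $s$ yields $a^2k+ka^2=2aka$, i.e.\ $[a,[a,k]]=0$, for every $k\in A_{SA}$ and hence for every $k\in A$ by complex linearity. The standard nilpotent-derivation argument then closes the proof: the inner derivation $\delta=[a,\cdot\,]$ satisfies $\delta^2=0$, so applying the Leibniz rule to $\delta^2(xy)$ gives $2\,\delta(x)\delta(y)=0$, and specializing to $x=y=k\in A_{SA}$ yields $\delta(k)^2=0$; since $\delta(k)$ is anti-self-adjoint, $\delta(k)^*\delta(k)=-\delta(k)^2=0$ forces $\delta(k)=0$, so $a$ is central. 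I expect the main obstacle to be spotting the right substitution: the obvious choice $x=e+b$ produces only the Sylvester-type equation $a\phi(b)+\phi(b)a=2aba$, which does not determine $\phi(b)$ explicitly. The decisive feature of $x=a^{-1}+b$ is that $\phi(a^{-1})=e$, so the cross term in $(e+\phi(b))^2$ collapses to $2\phi(b)$ and leaves the clean Jordan-product formula from which centrality follows by the short perturbation and derivation computation just sketched.
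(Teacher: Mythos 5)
Your argument is correct, and after the common first step it diverges from the paper's in a genuine way. Both proofs begin by extracting the Jordan-product formula $\phi(y)=\tfrac12(ay+ya)$ from additivity: the paper polarizes the identity $(a(x+y)^2a)^{1/2}=(ax^2a)^{1/2}+(ay^2a)^{1/2}$ and substitutes $x=a^{-1}$, while you apply additivity to the single pair $(a^{-1},b)$ and use $\phi(a^{-1})=e$ to make the cross term collapse — the same computation in slightly different clothing. From that point the paper invokes additivity of the inverse map $\phi^{-1}(x)=(a^{-1}x^2a^{-1})^{1/2}$ to obtain the companion identity $ay+ya=2(ya^2y)^{1/2}$, concludes $ay^2a=ya^2y$ so that $ay$ is normal, and combines normality with $\sigma(ay)\subset(0,\infty)$ to force $ay$ self-adjoint, hence $ay=ya$. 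You instead stay with $\phi$ alone: squaring the formula gives $(ab+ba)^2=4ab^2a$ on all of $A_+^{-1}$, linearizing at $b=e+sk$ yields $[a,[a,k]]=0$ for every self-adjoint $k$ (and then for all $k$ by complex linearity), and the square-zero inner derivation argument — Leibniz applied to $\delta^2(xy)$ gives $\delta(x)\delta(y)=0$, and $\delta(k)^*\delta(k)=-\delta(k)^2=0$ for anti-self-adjoint $\delta(k)$ — finishes the job. Your route avoids both the use of $\phi^{-1}$ and the normality/spectrum argument, at the cost of the perturbation and derivation computation; the two approaches are comparable in length and both elementary, and all the individual steps you sketch (the cancellation giving $2\phi(b)=ab+ba$, the coefficient extraction in $s$, and the derivation trick) check out.
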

\begin{proof}
If $a$ is a central element, then by an elementary calculation, we have $\phi(x)=ax$ for every $x\in A_+^{-1}$, so $\phi$ is additive. 

    Suppose that $\phi(x)=(ax^2a)^\frac12$, $x\in  A_+^{-1}$ is an additive map. Then 
    \[
    (a(x^2+xy+yx+y^2)a)^{\frac12}=(a(x+y)^2a)^\frac12=(ax^2a)^\frac12+(ay^2a)^\frac12
    \]
    for every $x,y\in A_+^{-1}$. To simplify the equation, we can square both sides, resulting in: 
\[
axya+ayxa=(ax^2a)^{\frac{1}{2}}(ay^2a)^{\frac{1}{2}}+(ay^2a)^{\frac{1}{2}}(ax^2a)^{\frac{1}{2}}.
\]
    Substituting $x=a^{-1}$ we obtain 
    \begin{equation}\label{444}
    ya+ay=2(ay^2a)^\frac12, \quad y\in A_+^{-1}.
    \end{equation}
    Since $\phi$ is additive, the inverse map $\phi^{-1}(x)=(a^{-1}x^2a^{-1})^\frac12$, $x\in A_+^{-1}$ is also additve. In a similar way as \eqref{444} we obtain
    \begin{equation}\label{555}
    ya^{-1}+a^{-1}y=2(a^{-1}y^2a^{-1})^\frac12, \quad y\in A_+^{-1}.
    \end{equation}
    Replacing $y$ by $aya$ in \eqref{555}, we have
    \begin{equation}\label{666}
     ay+ya=2(ya^2y)^\frac12, \quad y\in A_+^{-1}.
    \end{equation}
    By compairing \eqref{444} and \eqref{666} we have
    \[
    (ay)(ay)^*=ay^2a=ya^2y=(ay)^*(ay), \quad y\in A_+^{-1}. 
    \]
    Thus $ay$ is normal for every $y\in A_+^{-1}$. On the other hand, the equation 
    \[
    \sigma(ay)=\sigma(y^\frac12ay^\frac12)\subset (0,\infty)
    \]
    implies that the spectrum of $ay$ consists of positive real numbers. 
    It may be well known that a normal element, whose spectrum is confined to real numbers, is self-adjoint. 
This can be demonstrated by examining the closed subalgebra generated by the corresponding normal element, constituting a commutative $C^*$-algebra. As the spectra in a $C^*$-algebra and in a closed $*$-subalgebra are identical, and in a commutative $C^*$-algebra, an element whose spectrum is confined to the real numbers is self-adjoint, it follows that the element $ay$ is self-adjoint.     
     This implies that $a$ and $y$ commute for every $y \in A_+^{-1}$, which further implies that $a$ is a central element in $A$. Finally, a simple calculation shows that $\phi(x)=ax$ for $x \in A_+^{-1}$.
\end{proof}
\begin{theorem}\label{qim}
    Let $\phi\colon \aone\to \atwo$ be a surjection. Then, the following are equivalent:
    \begin{itemize}
        \item[(i)] $\|\phi(x)\phi(y)^{-1}\|=\|xy^{-1}\|$, \quad $x,y\in \aone$,
        \item[(ii)] there exists a Jordan $*$-isomorphism $J\colon A_1\to A_2$ such that 
        \[
        \phi(x)=(\phi(e)J(x)^2\phi(e))^\frac12, \quad x\in \aone.
        \]
    \end{itemize}
    When $\phi$ satisfies the condition {\rm(i)}, or equivalently {\rm(ii)}, the map $\phi$ is additive if and only if $\phi(e)$ is a central element in $A_2$. In this case, 
    \[
    \phi(x)=\phi(e)J(x), \quad x\in \aone.
    \]
    \end{theorem}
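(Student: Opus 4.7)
For (ii)$\Rightarrow$(i), the calculation is short: given $\phi(x)=(\phi(e)J(x)^2\phi(e))^{1/2}$, Proposition \ref{gyoe} applied with $a=\phi(e)$ yields $\|\phi(x)\phi(y)^{-1}\|=\|J(x)J(y)^{-1}\|$, and the triple-product identity $J(y^{-1}x^2y^{-1})=J(y)^{-1}J(x)^2J(y)^{-1}$ together with the fact that $J$ is a (norm-preserving) Jordan $*$-isomorphism gives
\[
\|J(x)J(y)^{-1}\|^2=\|J(y)^{-1}J(x)^2J(y)^{-1}\|=\|y^{-1}x^2y^{-1}\|=\|xy^{-1}\|^2.
\]
For (i)$\Rightarrow$(ii), the plan is to normalise at the unit by setting
\[
\psi(x)=(\phi(e)^{-1}\phi(x)^2\phi(e)^{-1})^{1/2},\qquad x\in\aone,
\]
so that Proposition \ref{gyoe} (now with $a=\phi(e)^{-1}$) yields $\|\psi(x)\psi(y)^{-1}\|=\|\phi(x)\phi(y)^{-1}\|=\|xy^{-1}\|$, together with $\psi(e)=e$ and the surjectivity of $\psi$. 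Injectivity of both $\psi$ and $\phi$ comes for free from the observation that in $\aone$ the equations $\|ab^{-1}\|=1=\|ba^{-1}\|$ force $a=b$: since $ab^{-1}$ is similar to the positive element $b^{-1/2}ab^{-1/2}$, its spectrum lies in $(0,\infty)$, and the two norm bounds (via $r\le\|\cdot\|$) pin $\sigma(ab^{-1})$ to $\{1\}$.

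The core step is to manufacture a positive homogeneous order isomorphism so that Theorem \ref{thecor} can be applied. Positive homogeneity of $\psi$ follows from the same spectral-pinching argument: from $\|\psi(tx)\psi(x)^{-1}\|=t$ and $\|\psi(x)\psi(tx)^{-1}\|=1/t$ one concludes that $\sigma(\psi(tx)\psi(x)^{-1})=\{t\}$, hence $\psi(tx)=t\psi(x)$. To convert (i) into an order condition, I would use the identity
\[
\|ab^{-1}\|^2=\|b^{-1}a^2b^{-1}\|=r(a^2b^{-2}),\qquad a,b\in\aone,
\]
to rewrite (i) as the spectral-radius identity $r(\Psi(u)\Psi(v)^{-1})=r(uv^{-1})$ for the auxiliary map $\Psi(u):=\psi(u^{1/2})^2$. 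Combined with the standard equivalence $u\le v\iff r(uv^{-1})\le 1$ (the self-adjoint positive $v^{-1/2}uv^{-1/2}$ is similar to $uv^{-1}$), this shows that $\Psi$ preserves the order in both directions. Since $\Psi$ is unital and inherits positive homogeneity from $\psi$, Theorem \ref{thecor} supplies a Jordan $*$-isomorphism $J\colon A_1\to A_2$ with $\Psi=J$; substituting $u=x^2$ and using $J(x^2)=J(x)^2$ and $J(x)\ge 0$ gives $\psi(x)=J(x)$, which rearranges to the formula in (ii).

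For the final assertion, once $\phi(x)=(\phi(e)J(x)^2\phi(e))^{1/2}$ is established and since $J$ is additive and bijective from $\aone$ onto $\atwo$, the additivity of $\phi$ is equivalent to the additivity of $y\mapsto(\phi(e)y^2\phi(e))^{1/2}$ on $\atwo$. By Proposition \ref{ax2a} this happens precisely when $\phi(e)$ is central in $A_2$, and in that case $\phi(x)=\phi(e)J(x)$. I expect the main obstacle to be the conversion of the operator-norm identity (i) into the spectral-radius form needed to access the order, since only the latter characterises $u\le v$; once this reformulation is in hand, the remainder is a clean appeal to Theorem \ref{thecor} and Proposition \ref{ax2a}.
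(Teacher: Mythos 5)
Your proposal is correct. For (ii)$\Rightarrow$(i) and for the final additivity assertion you follow essentially the paper's own route (Proposition \ref{gyoe} with $a=\phi(e)$ plus the triple-product identity, and Proposition \ref{ax2a}), but for the main implication (i)$\Rightarrow$(ii) you take a genuinely different path. The paper sets $\psi(x)=\phi(x^{1/2})^2$, uses $\|y^{-1/2}xy^{-1/2}\|=\inf\{t\colon x\le ty\}$ to show that $\psi$ is a surjective Thompson isometry, invokes \cite[Theorem 9]{hm} to obtain $\psi=\psi(e)^{1/2}\bigl(PJ(\cdot)+(e-P)J((\cdot)^{-1})\bigr)\psi(e)^{1/2}$ for a central projection $P$, and then eliminates $P$ by testing at $x=\tfrac12 e$. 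You instead normalise at the unit via Proposition \ref{gyoe} with $a=\phi(e)^{-1}$, establish positive homogeneity and injectivity by the spectral-pinching argument (which is valid: $\sigma(ab^{-1})=\sigma(b^{-1/2}ab^{-1/2})\subset(0,\infty)$ and $\sigma(ba^{-1})=\sigma(ab^{-1})^{-1}$, so the two norm bounds squeeze the spectrum to a single point), convert the quotient-norm hypothesis into the order via $\|ab^{-1}\|^2=\|b^{-1}a^2b^{-1}\|=r(a^2b^{-2})$ and $u\le v\iff r(uv^{-1})\le1$, and feed the resulting unital positively homogeneous order isomorphism $\Psi(u)=\psi(u^{1/2})^2$ directly into Theorem \ref{thecor}. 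All the identities you use check out, and unwinding $\Psi=J$ does give $\phi(x)=(\phi(e)J(x)^2\phi(e))^{1/2}$. Since the paper derives Theorem \ref{thecor} from \cite[Theorem 9]{hm}, both arguments rest on the same foundation; yours trades the central-projection bookkeeping of the Thompson-isometry representation for the (short and correct) direct verifications of homogeneity and order preservation, arguably yielding a slightly more self-contained reduction to ``the corollary.''
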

\begin{proof}
We prove that (i) implies (ii). 
    Define $\psi\colon \aone\to \atwo$ by $\psi(x)=\phi(x^\frac12)^2$, $x\in \aone$. Then $\psi$ is a surjection. By calculation, we have
    \begin{multline*}
       \|y^{-\frac12}x^\frac12\|=\|(x^\frac12y^{-\frac12})^*\|=\|x^\frac12y^{-\frac12}\|
       =
       \|\phi(x^\frac12)\phi(y^{\frac12})^{-1}\|
      \\
      =
       \|\psi(x)^\frac12\psi(y)^{-\frac12}\|
       =
       \|\psi(y)^{-\frac12}\psi(x)^\frac12\|.
    \end{multline*}
        Then we have
        \begin{multline}\label{a1}
            \|y^{-\frac12}xy^{-\frac12}\|=
            \|(y^{-\frac12}x^\frac12)(y^{-\frac12}x^\frac12)^*\|=\|y^{-\frac12}x^\frac12\|^2
            \\
            =
            \|\psi(y)^{-\frac12}\psi(x)^\frac12\|^2
            =
            \|\psi(y)^{-\frac12}\psi(x)\psi(y)^{-\frac12}\|.
        \end{multline}
        Similarly, we have
        \begin{equation}\label{a2}
        \|x^{-\frac12}yx^{-\frac12}\|=\|\psi(x)^{-\frac12}\psi(y)\psi(x)^{-\frac12}\|.
        \end{equation}
    Since $y^{-\frac12}xy^{-\frac12}$ is positive, we have
    \begin{multline}\label{a3}
        \|y^{-\frac12}xy^{-\frac12}\|=\sup\{t\colon t\in \sigma(y^{-\frac12}xy^{-\frac12})\}
        \\
        =
        \inf\{t\colon y^{-\frac12}xy^{-\frac12}\le te\}
        =
        \inf\{t\colon x\le ty\}.
    \end{multline}
    In the same way, we have
    \begin{equation}\label{a4}
    \|x^{-\frac12}yx^{-\frac12}\|=\inf\{s\colon y\le sx\}.
    \end{equation}
    Recall that the Thompson metric $d_T(\cdot,\cdot)$ on $A_+^{-1}$ is   
    \[
        d_T(x,y)=\log\max\{\inf\{t\colon x\le ty\}, \inf\{s\colon y\le sx\}\}, \quad x,y\in A_+^{-1}.
    \]
    It follows by \eqref{a1}, \eqref{a2}, \eqref{a3} and \eqref{a4} that for every $x,y\in \aone$
    \begin{multline*}
        d_T(x,y)=\log\max\{\|x^{-\frac12}yx^{-\frac12}\|, \|y^{-\frac12}xy^{-\frac12}\|\}
        \\
        =
        \log\max\{\|\psi(x)^{-\frac12}\psi(y)\psi(x)^{-\frac12}\|, \|\psi(y)^{-\frac12}\psi(x)\psi(y)^{-\frac12}\|\}
        \\
        =d_T(\psi(x),\psi(y)).
    \end{multline*}
    We conclude that $\psi\colon \aone\to \atwo$ is a surjective Thompson isometry. By \cite[Theorem 9]{hm} there exists a Jordan $*$-isomorphism $J\colon A_1\to A_2$ and a central projection $P\in A_2$ such that 
    \begin{equation}\label{111}
    \psi(x)=\psi(e)^\frac12(PJ(x)+(e-P)J(x^{-1}))\psi(e)^\frac12, \quad x\in \aone.
    \end{equation}
    Substituting $x=\frac12 e$ and $y=e$ in \eqref{a1}, we obtain by \eqref{111} that 
    \begin{multline*}
        \frac12=\left\|e^{-\frac12}\frac12ee^{-\frac12}\right\|
        =
        \left\|\psi(e)^{-\frac12}\psi\left(\frac12e\right)\psi(e)^{-\frac12}\right\|
        \\
        =
        \left\|PJ\left(\frac12e\right)+(e-P)J\left(\left(\frac12e\right)^{-1}\right)\right\|
        =
        \left\|\frac12P+2(e-P)\right\|,
    \end{multline*}
    as $P$ is a central projection, we have
    \[
    =\max\{\|\frac12P\|,\|2(e-P)\|\}.
    \]
    It follows that $e-P=0$, so $\psi=\psi(e)^\frac12J\psi(e)^\frac12$. We conclude that 
    \[
    \phi(x)=\psi(x^2)^\frac12=(\phi(e)J(x)^2\phi(e))^\frac12, \quad x\in \aone.
    \]

    Suppose that (ii) holds. We prove (i). Applying Proposition \ref{gyoe} we have
    \begin{multline}\label{333}
    \|\phi(x)\phi(y)^{-1}\|=\|(\phi(e)J(x)^2\phi(e))^\frac12(\phi(e)J(y)^2\phi(e))^{-\frac12}\|
    \\
    =\|J(x)J(y)^{-1}\|, \quad x,y\in \aone.
    \end{multline}
    On the other hand, as a Jordan $*$-isomorphism preserves squaring, the triple product, the inverse, and the norm, we have
    \begin{multline*}
        \|J(x)J(y)^{-1}\|^2=\|(J(x)J(y)^{-1})^*J(x)J(y)^{-1}\|
        \\
        =\|J(y)^{-1}J(x^2)J(y)^{-1}\|=\|J(y^{-1}x^2y^{-1})\|
        \\
        =\|y^{-1}x^2y^{-1}\|=\|xy^{-1}\|^2, \quad x,y\in \aone.
    \end{multline*}
    Then by \eqref{333} we conclude 
    \[
    \|\phi(x)\phi(y)^{-1}\|=\|xy^{-1}\|
    \]
    holds for every $x,y\in \aone$.

    Suppose that $\phi(x)=(\phi(e)J(x)^2\phi(e))^\frac12$, $x\in \aone$ and $\phi$ is additive. This implies that the mapping $y \mapsto (\phi(e)y^2\phi(e))^\frac12$ is additive. Therefore, by Proposition \ref{ax2a}, we conclude that $\phi(e)$ is a central element in $A_2$. We infer by a simple calculation that $\phi(x)=\phi(e)J(x)$ for every $x\in \aone$.
\end{proof}
Note that if $\|\phi(x)\phi(y)^{-1}\|=\|xy^{-1}\|$ holds for every $x,y\in \aone$ and $\phi(e)$ is not a central element in $A_2$, then $\phi$ is not extended to even an additive map from $A_1$ into $A_2$.

    The structure of a surjection that preserves the spectrum or the spectrum seminorm of the quotient is relatively simple. It can be extended to a Jordan $*$-isomorphism followed by the implementation of an element in $\atwo$.
    To elaborate, the following holds true.
\begin{theorem}\label{hoo}
Let $\phi\colon\aone\to \atwo$ be a surjection. Then, the following are equivalent:
\begin{itemize}
    \item[(i)]
    There exists a Jordan $*$-isomorphism $J\colon A_1\to A_2$ such that 
    \[
    \phi(x)=\phi(e)^\frac12J(x)\phi(e)^\frac12, \quad x\in \aone,
    \]
        \item[(ii)]
    $\|\phi(x)\phi(y)^{-1}\|_S=\|xy^{-1}\|_S, \quad x,y\in \aone$,
    \item[(iii)]
    $\sigma(\phi(x)\phi(y)^{-1})=\sigma(xy^{-1}),\quad x,y\in \aone$. 
\end{itemize}
\end{theorem}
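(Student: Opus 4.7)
The plan is to prove the cycle (i) $\Rightarrow$ (iii) $\Rightarrow$ (ii) $\Rightarrow$ (i). The implication (iii) $\Rightarrow$ (ii) is immediate from the definition $\|a\|_S=\sup\{|t|:t\in\sigma(a)\}$. For (i) $\Rightarrow$ (iii), I would rewrite
\[
\phi(x)\phi(y)^{-1}=\phi(e)^{1/2}J(x)J(y)^{-1}\phi(e)^{-1/2},
\]
so $\sigma(\phi(x)\phi(y)^{-1})=\sigma(J(x)J(y)^{-1})$ by invariance of spectrum under similarity. Using $\sigma(AB)=\sigma(BA)$, the identity $J(y)^{-1/2}=J(y^{-1/2})$ (since a Jordan $*$-isomorphism preserves squaring and inverses), and the triple-product property $J(aba)=J(a)J(b)J(a)$, one then computes
\[
\sigma(J(x)J(y)^{-1})=\sigma(J(y)^{-1/2}J(x)J(y)^{-1/2})=\sigma(J(y^{-1/2}xy^{-1/2}))=\sigma(y^{-1/2}xy^{-1/2})=\sigma(xy^{-1}),
\]
where the last step is again $\sigma(AB)=\sigma(BA)$ together with the fact that $J$ preserves spectra.

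The heart of the argument is (ii) $\Rightarrow$ (i). The key observation is that for $x,y\in\aone$ the product $xy^{-1}$ is similar to the positive invertible element $y^{-1/2}xy^{-1/2}$, whence $\sigma(xy^{-1})\subset(0,\infty)$ and
\[
\|xy^{-1}\|_S=\|y^{-1/2}xy^{-1/2}\|_S=\|y^{-1/2}xy^{-1/2}\|,
\]
and analogously for $\phi(x),\phi(y)\in\atwo$. Thus (ii) rewrites as
\[
\|\phi(y)^{-1/2}\phi(x)\phi(y)^{-1/2}\|=\|y^{-1/2}xy^{-1/2}\|,\qquad x,y\in\aone,
\]
and together with its counterpart (swap $x,y$) and the Thompson-metric identities \eqref{a3}--\eqref{a4} already derived in the proof of Theorem \ref{qim}, this yields $d_T(\phi(x),\phi(y))=d_T(x,y)$. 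Hence $\phi$ is a surjective Thompson isometry, and \cite[Theorem 9]{hm} produces a Jordan $*$-isomorphism $J\colon A_1\to A_2$ and a central projection $P\in A_2$ with
\[
\phi(x)=\phi(e)^{1/2}\bigl(PJ(x)+(e-P)J(x^{-1})\bigr)\phi(e)^{1/2},\qquad x\in\aone.
\]

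To finish, I would force $P=e$ by specialising to $x=\tfrac12 e$, $y=e$: condition (ii) (or (iii)) gives $\|\phi(\tfrac12 e)\phi(e)^{-1}\|_S=\tfrac12$, while the displayed formula shows that $\phi(\tfrac12 e)\phi(e)^{-1}$ is similar to $\tfrac12 P+2(e-P)=2e-\tfrac32 P$, whose spectral seminorm equals $2$ unless $P=e$. Hence $P=e$ and (i) follows. The main obstacle would normally be recognising $\phi$ as a Thompson isometry, but here the spectral-seminorm identity $\|xy^{-1}\|_S=\|y^{-1/2}xy^{-1/2}\|$ makes that step markedly more direct than the analogous argument in Theorem \ref{qim}, where the auxiliary map $\psi(x)=\phi(x^{1/2})^2$ had to be introduced to kill the squaring.
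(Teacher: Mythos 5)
Your proposal is correct and follows essentially the same route as the paper: reduce (ii) to the statement that $\phi$ is a surjective Thompson isometry via $\|xy^{-1}\|_S=\inf\{t\colon x\le ty\}$, invoke \cite[Theorem 9]{hm}, and kill the central projection $P$ by testing at $x=\frac12 e$; the implication (i)$\Rightarrow$(iii) via similarity, $\sigma(AB)=\sigma(BA)$, and the triple product is likewise the paper's argument. The only cosmetic difference is that you phrase the seminorm identity as $\|xy^{-1}\|_S=\|y^{-1/2}xy^{-1/2}\|$ and cite the displays \eqref{a3}--\eqref{a4}, whereas the paper recomputes the infima directly.
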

\begin{proof}
    By the definition of the spectrum norm, it is evident that (iii) implies (ii). 

    We prove that (ii) implies (i). For every $x,y\in \aone$, 
    \begin{multline*}
        \|xy^{-1}\|_S=\sup\{t\colon t\in \sigma(xy^{-1})\}
        =
        \sup\{t\colon t\in \sigma(y^{-\frac12}xy^{-\frac12})\}
        \\
        =
        \inf\{t\colon y^{-\frac12}xy^{-\frac12}\le te\}
        =
        \inf\{t\colon x\le ty\}.
    \end{multline*}
    In the same way, $\|yx^{-1}\|_S=\inf\{t\colon y\le tx\}$, 
    $\|\phi(x)\phi(y)^{-1}\|_S=\inf\{t\colon \phi(x)\le t\phi(y)\}$, and 
    $\|\phi(y)\phi(x)^{-1}\|_S=\inf\{t\colon \phi(y)\le t\phi(x)\}$.
    Due to the definition of the Thompson metric, we obtain
    \begin{multline*} 
    d_T(x,y)=\log\max\{\|xy^{-1}\|_S, \|yx^{-1}\|_S\}
    \\
    =
    \log\max\{\|\phi(x)\phi(y)^{-1}\|_S,\|\phi(y)\phi(x)^{-1}\|_S\}
    =
    d_T(\phi(x),\phi(y))
    \end{multline*}
    for every $x,y\in \aone$; $\phi$ is a (surjective) Thompson isometry. 
    Hence, according to \cite[Theorem 9]{hm} there exist a Jordan $*$-isomorphism $J$ and a central projection $P\in A_2$ such that 
    \[
    \phi(x)=\phi(e)^\frac12(PJ(x)+(e-P)J(x^{-1}))\phi(e)^\frac12, \quad x\in \aone. 
    \]
    Letting $x=\frac12e$, we have 
    \[
    \phi\left(\frac{1}{2}e\right)=\phi(e)^\frac12\left(\frac12 P+ 2(e-P)\right)\phi(e)^\frac12.
    \]
    So
    \begin{multline*}
    \frac12=\left\|\frac12 ee^{-1}\right\|_S=\left\|\phi(e)^\frac12\left(\frac12 P+2(e-P)\right) \phi(e)^{-\frac12}\right\|_S
    \\
    =
    \left\|\frac12P+2(e-P)\right\|_S.
    \end{multline*}
    It follows that $e-P=0$. We conclude that $\phi=\phi(e)^\frac12J\phi(e)^\frac12$. 

    Suppose that (i) holds. Then for every $x,y\in \aone$, we infer $\phi(x)\phi(y)^{-1}=\phi(e)^\frac12J(x)J(y^{-1})\phi(e)^{-\frac12}$, so
    \begin{multline*}      \sigma(\phi(x)\phi(y)^{-1})=\sigma(\phi(e)^\frac12J(x)J(y^{-1})\phi(e)^{-\frac12})
      \\
      =
      \sigma(J(x)J(y^{-1}))=\sigma(J(x^\frac12)J(y^{-1})J(x^\frac12))
      \\
      =      \sigma(J(x^\frac12y^{-1}x^\frac12))=\sigma(x^\frac12y^{-1}x^\frac12)=\sigma(xy^{-1}).
    \end{multline*}
    Thus, (iii) holds.
\end{proof}
\section{Maps that preserve the spectrum or the norm of the multiplications of elements in positive definite  cones}\label{s3}
In this section, we study not only multiplicatively spectrum-preserving maps but also multiplicatively norm-preserving maps and multiplicatively spectrum seminorm-preserving maps on a positive definite cone. 
We prove the following as an application of Lemma 13 in \cite{cmm}. 
\begin{lemma}\label{1}
    Let $a, a'\in A_+^{-1}$. 
    Then, the following are equivalent:
    \begin{itemize}
        \item[(i)] $a=a'$,
        \item[(ii)] $\|ax\|=\|a'x\|$ for every $x\in A_+^{-1}$,
        \item[(iii)] $\|xax\|=\|xa'x\|$ for every $x\in A_+^{-1}$.
    \end{itemize}
    \end{lemma}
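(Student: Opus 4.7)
The plan is to observe that (i) trivially implies both (ii) and (iii), and then to reduce (ii) $\Rightarrow$ (i) to (iii) $\Rightarrow$ (i) before tackling the latter. The $C^{*}$-identity gives $\|ax\|^{2} = \|(ax)^{*}(ax)\| = \|xa^{2}x\|$ and likewise $\|a'x\|^{2} = \|x(a')^{2}x\|$, so condition (ii) is precisely condition (iii) with $a$, $a'$ replaced by $a^{2}$, $(a')^{2}$. Since the positive square root is injective on $A_{+}^{-1}$, an application of (iii) $\Rightarrow$ (i) to the pair $(a^{2},(a')^{2})$ immediately forces $a = a'$.

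For (iii) $\Rightarrow$ (i) the key observation is that for $a, x \in A_{+}^{-1}$ the element $xax$ is positive invertible, and by the similarity $x^{-1}(xax)x = ax^{2}$ it has the same spectrum as $ax^{2}$, so $\|xax\| = \sup\sigma(xax) = \sup\sigma(ax^{2})$. Substituting $x \mapsto x^{1/2}$, condition (iii) becomes $\sup\sigma(ax) = \sup\sigma(a'x)$ for every $x \in A_{+}^{-1}$. Repeating the order-theoretic computation used at the start of the proof of Theorem \ref{hoo}, and using that $\sigma(ax) = \sigma(x^{1/2}ax^{1/2})$ with $x^{1/2}ax^{1/2}$ positive, one has
\[
\sup\sigma(ax) \;=\; \|x^{1/2}ax^{1/2}\| \;=\; \inf\{t > 0 : a \leq tx^{-1}\},
\]
so, after the substitution $y = x^{-1}$, condition (iii) translates into the order statement
\[
\inf\{t > 0 : a \leq ty\} \;=\; \inf\{t > 0 : a' \leq ty\}, \qquad y \in A_{+}^{-1}.
\]

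To conclude $a = a'$ from this identity I would test it with $y = a'$: the right-hand side equals $1$, so $a \leq ta'$ for every $t > 1$, and closedness of the positive cone gives $a \leq a'$. Interchanging the roles of $a$ and $a'$ yields the reverse inequality, and therefore $a = a'$. The whole argument is essentially bookkeeping once the spectral-to-order correspondence is in hand; the only mildly delicate point is making sure the $C^{*}$-identity is invoked correctly to turn condition (ii) into an instance of condition (iii) for the squares, so I do not expect any serious obstacle beyond this.
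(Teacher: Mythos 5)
Your argument is correct. The overall architecture coincides with the paper's: (i)$\Rightarrow$(ii),(iii) is immediate, and the reduction of (ii) to (iii) via the $C^*$-identity $\|ax\|^{2}=\|xa^{2}x\|$ followed by uniqueness of positive square roots is exactly what the paper does. The one place you diverge is the implication (iii)$\Rightarrow$(i): the paper disposes of it in one line by invoking \cite[Lemma 13]{cmm}, which states that $a\le a'$ holds if and only if $\|xax\|\le\|xa'x\|$ for every $x\in A_+^{-1}$, whereas you give a self-contained proof by converting $\|x^{1/2}ax^{1/2}\|$ into the order quantity $\inf\{t>0\colon a\le tx^{-1}\}$ and testing at $y=a'$ (your verification that $\{t\colon a\le ta'\}$ is an up-set and that closedness of the positive cone yields $a\le a'$ is sound). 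In effect you have reproved the equality case of the cited lemma; your route is more elementary and keeps the lemma independent of \cite{cmm}, while the paper's citation buys the stronger one-sided (order-preserving) statement, which it needs again elsewhere (e.g.\ in Lemma \ref{34} and Proposition \ref{ineq}), so relying on the reference there is the economical choice in context.
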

\begin{proof}
If $a=a'$, then (ii) and (iii) are apparent. Suppose that (iii) holds. Then, $a\le a'$ follows from $\|xax\|\le \|xa'x\|$, $x\in A_+^{-1}$ by \cite[Lemma 13]{cmm}. In the same manner, we infer that $a'\le a$. Thus, we obtain (i). Suppose that (ii) holds. Since $(ax)^*(ax)=xa^2x$ for every $x\in A_+^{-1}$ as $x$ and $a$ are self-adjoint, so
\[
\|xa^2x\|=\|ax\|^2=\|a'x\|^2=\|xa'^2x\|, \quad x\in A_+^{-1}. 
\]
As (iii) implies (i), we have $a^2=a'^2$. Hence, $a=a'$.
    
\end{proof}
\begin{lemma}\label{2}
Let $a,a'\in A_+^{-1}$. Then, $a\le a'$ if and only if $\|ay\|_S\le\|a'y\|_S$ for every $y\in A_+^{-1}$. 
\end{lemma}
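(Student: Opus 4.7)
The plan is to realize $\|ay\|_S$ as the norm of a genuine positive element, which reduces the statement to the operator inequality already exploited in the proof of Lemma \ref{1}. For $a,y\in A_+^{-1}$ I would use the similarity
\[
ay = y^{-\frac12}\bigl(y^{\frac12} a y^{\frac12}\bigr) y^{\frac12},
\]
which gives $\sigma(ay) = \sigma(y^{\frac12} a y^{\frac12})$. Since $y^{\frac12} a y^{\frac12}$ is positive (in fact positive definite), its spectral radius coincides with its norm, producing the key identity $\|ay\|_S = \|y^{\frac12} a y^{\frac12}\|$, and likewise with $a'$ in place of $a$.

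From here the forward direction is immediate: if $a\le a'$ then $y^{\frac12} a y^{\frac12} \le y^{\frac12} a' y^{\frac12}$, and the monotonicity of the norm on the positive cone yields $\|ay\|_S \le \|a'y\|_S$. For the converse I would use that the map $y\mapsto y^{\frac12}$ is a bijection of $A_+^{-1}$ onto itself by the continuous functional calculus, so on setting $x=y^{\frac12}$ the standing hypothesis becomes $\|xax\|\le \|xa'x\|$ for every $x\in A_+^{-1}$. This is exactly the inequality form of \cite[Lemma 13]{cmm} invoked in the proof of Lemma \ref{1}, and it delivers $a\le a'$.

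There is no genuine obstacle: once the identity $\|ay\|_S = \|y^{\frac12} a y^{\frac12}\|$ is in hand, the entire statement collapses to the cited lemma from \cite{cmm}. The only small care required is in recording that conjugation by $y^{\frac12}$ places us in the positive self-adjoint setting where $\|\cdot\|_S$ and $\|\cdot\|$ coincide, which is a direct functional-calculus observation.
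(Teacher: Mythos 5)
Your proposal is correct and follows essentially the same route as the paper: both directions rest on the identity $\|ay\|_S=\|y^{\frac12}ay^{\frac12}\|$ (via $\sigma(ay)=\sigma(y^{\frac12}ay^{\frac12})$ and positivity), with the forward implication from norm monotonicity on the positive cone and the converse from \cite[Lemma 13]{cmm}. The paper merely phrases the substitution as replacing $y$ by $y^2$ instead of setting $x=y^{\frac12}$, which is the same step.
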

\begin{proof}
    Suppose that $a\le a'$. Then $y^\frac12ay^\frac12\le y^\frac12a'y^\frac12$ for every $y\in A_+^{-1}$. Since $\sigma(ay)=\sigma(y^\frac12ay^\frac12)$ (resp. $\sigma(a'y)=\sigma(y^\frac12a'y^\frac12)$) for every $y\in A_+^{-1}$, which is a subset of the set of 
 positive real numbers, we obtain
    \begin{multline*}
    \|ay\|_S=
    \sup\{t\colon t\in \sigma(ay)\}=
    \sup\{t\colon t\in \sigma(y^\frac12ay^\frac12)\}
    \\
    \le
    \sup\{t\colon t\in \sigma(y^\frac12a'y^\frac12)\}
    =\sup\{t\colon t\in \sigma(a'y)\}=\|a'y\|_S
    \end{multline*}
    for every $y\in A_+^{-1}$.

    Conversely, suppose that $\|ay\|_S\le \|a'y\|_S$ for every $y\in A_+^{-1}$. 
    Since $\sigma(ay^2)=\sigma(yay)$  (resp. $\sigma(a'y^2)=\sigma(ya'y)$) and $\|yay\|=\|yay\|_S$ (resp. $\|ya'y\|=\|ya'y\|_S$) for every $y\in A_+^{-1}$, 
we have 
\[
\|yay\|=\|ay^2\|_S \le \|a'y^2\|_S=\|ya'y\|
\]
for every $y \in   A_+^{-1}$. By \cite[Lemma 13]{cmm}, we have $a \le a'$.
    \end{proof}
\begin{corollary}\label{3}
    Let $a,a'\in A_+^{-1}$. Then $a=a'$ if and only if $\|ay\|_S=\|a'y\|_S$ holds for every $y\in A_+^{-1}$.
\end{corollary}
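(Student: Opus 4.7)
The plan is to deduce this immediately from Lemma \ref{2}, treating the equation $\|ay\|_S = \|a'y\|_S$ as two simultaneous inequalities. The forward direction is trivial: if $a = a'$ then certainly $\|ay\|_S = \|a'y\|_S$ for every $y \in A_+^{-1}$.

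For the converse, assume $\|ay\|_S = \|a'y\|_S$ for every $y \in A_+^{-1}$. Then in particular $\|ay\|_S \le \|a'y\|_S$ holds for every $y \in A_+^{-1}$, so by Lemma \ref{2} we conclude $a \le a'$. By the same reasoning applied to the reverse inequality $\|a'y\|_S \le \|ay\|_S$, Lemma \ref{2} gives $a' \le a$. Antisymmetry of the order on self-adjoint elements then yields $a = a'$.

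There is no real obstacle here; the content of the corollary is entirely contained in Lemma \ref{2}, and the proof is a one-line symmetric application of it. The only thing to mention explicitly is that both directions of the inequality in Lemma \ref{2} are being used.
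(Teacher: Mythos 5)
Your argument is correct and is exactly the one the paper intends: the paper simply states that the proof is apparent from Lemma \ref{2} and omits it, and your two-sided application of that lemma together with antisymmetry of the order is the standard way to fill it in.
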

A proof is apparent from Lemma \ref{2} and is omitted.
\begin{lemma}\label{34}
Suppose that $\phi$ and $\psi$ are surjections from $\aone$ onto $\atwo$ such that 
\begin{equation*}\label{yxxy} 
\|\phi(y)\psi(x)\phi(y)\|=\|yxy\|,\quad x,y\in \aone
\end{equation*}   
and $\psi(e)=e$.
Then, there exists a Jordan $*$-isomorphism $J\colon A_1\to A_2$ such that $\psi=J$ on $\aone$.
\end{lemma}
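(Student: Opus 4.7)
My plan is to show that $\psi$ is a positive homogeneous order isomorphism with $\psi(e)=e$, and then invoke Theorem~\ref{thecor} to conclude $\psi=J$. The key technical observation is that the surjectivity of $\phi$ lets us replace $\phi(y)$ by an arbitrary element $z\in\atwo$, reducing the two-variable identity involving $\phi$ and $\psi$ to a one-variable statement about $\psi$ alone that can be read through Lemma~\ref{1} (equivalently, \cite[Lemma 13]{cmm}).

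First I would verify that $\psi$ preserves order in both directions. If $x\le x'$ in $\aone$, then since $y\ge 0$ we have $yxy\le yx'y$ and so $\|yxy\|\le \|yx'y\|$; the hypothesis gives $\|\phi(y)\psi(x)\phi(y)\|\le\|\phi(y)\psi(x')\phi(y)\|$ for every $y$. Surjectivity of $\phi$ then yields
\[
\|z\psi(x)z\|\le \|z\psi(x')z\|,\qquad z\in \atwo,
\]
and the implication used inside Lemma~\ref{1} (namely \cite[Lemma 13]{cmm}) forces $\psi(x)\le \psi(x')$. Conversely, if $\psi(x)\le \psi(x')$, then $\phi(y)\psi(x)\phi(y)\le \phi(y)\psi(x')\phi(y)$ for every $y$, which by the same identity gives $\|yxy\|\le \|yx'y\|$ for every $y$, and \cite[Lemma 13]{cmm} applied in $A_1$ gives $x\le x'$. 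Because $\psi$ is surjective and order-preserving in both directions, it is automatically a bijection, hence an order isomorphism.

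Next I would check positive homogeneity. For $t>0$ and $x\in\aone$,
\[
\|\phi(y)\psi(tx)\phi(y)\|=\|y(tx)y\|=t\|yxy\|=t\|\phi(y)\psi(x)\phi(y)\|=\|\phi(y)(t\psi(x))\phi(y)\|
\]
for every $y$, so by surjectivity of $\phi$ we obtain $\|z\psi(tx)z\|=\|z(t\psi(x))z\|$ for every $z\in\atwo$. Lemma~\ref{1} applied in $A_2$ then gives $\psi(tx)=t\psi(x)$.

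Thus $\psi\colon\aone\to\atwo$ is a positive homogeneous order isomorphism with $\psi(e)=e$, so Theorem~\ref{thecor} supplies a Jordan $*$-isomorphism $J\colon A_1\to A_2$ with $\psi=\psi(e)^{\frac12}J\psi(e)^{\frac12}=J$ on $\aone$. I do not expect a serious obstacle; the main conceptual step is to eliminate $\phi$ by passing $z=\phi(y)$ through the whole cone $\atwo$, after which the order/scaling behavior of $\psi$ is detected by the $\|z\,\cdot\,z\|$ characterization built in the previous lemmas.
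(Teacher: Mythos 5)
Your proof is correct and follows essentially the same route as the paper: establish that $\psi$ is order-preserving in both directions and positive homogeneous by passing $z=\phi(y)$ over all of $\atwo$ and invoking \cite[Lemma 13]{cmm} (via Lemma~\ref{1}), then apply Theorem~\ref{thecor} together with $\psi(e)=e$.
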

\begin{proof}
    We show that $\psi$ is order preserving in both directions. By Lemma 13 in \cite{cmm} we have $x\le x'$ if and only if $\|yxy\|\le \|yx'y\|$ if and only if 
    $\|\phi(y)\psi(x)\phi(y)\|\le \|\phi(y)\psi(x')\phi(y)\|$ if and only if $\psi(x)\le \psi(x')$ as $\phi(\aone)=\atwo$ for every $x,x'\in \aone$. This means that $\psi$ is order preserving in both directions. Hence, we see that $\psi$ is an injection, so a bijection. We prove that $\psi$ is positive homogeneous. Let $t>0$ and $x\in \aone$ be arbitrary. We have
    \begin{multline*}    \|\phi(y)\psi(tx)\phi(y)\|=\|ytxy\|
    \\
    =t\|yxy\|=t\|\phi(y)\psi(x)\phi(y)\|=\|\phi(y)t\psi(x)\phi(y)\|
    \end{multline*}
    for every $y\in \aone$. Since $\phi(\aone)=\atwo$, we have $\psi(tx)=t\psi(x)$ by Lemma \ref{1}. 
    As $t$ and $x$ are arbitrary, we see that $\psi$ is positive homogeneous. By Theorem \ref{thecor} there exists a Jordan $*$-isomorphism $J\colon A_1\to A_2$ such that $\psi=J$ on $\aone$ since $\psi(e)=e$.
\end{proof}
As a result of Lemma \ref{34}, we characterize a surjective map that preserves the norm of the triple product.
\begin{corollary}
    Let $\phi\colon \aone\to \atwo$ be a surjection. Suppose that 
    \begin{equation}\label{yyy}
    \|\phi(y)\phi(x)\phi(y)\|=\|yxy\|,\quad x,y\in \aone. 
    \end{equation}
    Then there exists a Jordan $*$-isomorphism $J\colon A_1\to A_2$ such that $\phi=J$ on $\aone$. Conversely, a Jordan $*$-isomorphism $J$ satisfies
    \[
    \|J(y)J(x)J(y)\|=\|yxy\|,\quad x,y\in \aone.
    \]
\end{corollary}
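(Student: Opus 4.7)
The plan is to reduce the forward implication to Lemma \ref{34} applied with $\psi = \phi$. Since Lemma \ref{34} requires the second map to send $e$ to $e$, the crux of the proof will be to extract $\phi(e) = e$ from the hypothesis \eqref{yyy}. Once this is in hand, Lemma \ref{34} immediately produces the desired Jordan $*$-isomorphism $J\colon A_1 \to A_2$ with $\phi = J$ on $\aone$.

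To show $\phi(e) = e$, I would first substitute $x = y$ in \eqref{yyy} to obtain $\|\phi(y)^3\| = \|y^3\|$; since $\phi(y)$ and $y$ are positive (hence normal), this yields $\|\phi(y)\| = \|y\|$ for every $y \in \aone$, and in particular $\|\phi(e)\| = 1$. Next, using the surjectivity of $\phi$, I would pick $y_0 \in \aone$ with $\phi(y_0) = \phi(e)^{-1}$. The norm identity $\|\phi(y_0)\| = \|y_0\|$ then gives $\|\phi(e)^{-1}\| = \|y_0\|$, while substituting $x = e$ and $y = y_0$ in \eqref{yyy} yields $\|\phi(e)^{-1}\phi(e)\phi(e)^{-1}\| = \|y_0^2\|$, i.e.\ $\|\phi(e)^{-1}\| = \|y_0\|^2$. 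Comparing the two identities forces $\|y_0\| = 1$ and hence $\|\phi(e)^{-1}\| = 1$. Since $\phi(e)$ is positive invertible with $\|\phi(e)\| = \|\phi(e)^{-1}\| = 1$, its spectrum lies in $(0,1]$ and in $[1,\infty)$ simultaneously, so $\sigma(\phi(e)) = \{1\}$ and consequently $\phi(e) = e$.

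With $\phi(e) = e$ established, Lemma \ref{34} with $\psi = \phi$ is directly applicable and furnishes the Jordan $*$-isomorphism $J$ satisfying $\phi = J$ on $\aone$. The converse direction is routine: a Jordan $*$-isomorphism preserves the triple product and the norm, as recorded in the introduction, so $\|J(y)J(x)J(y)\| = \|J(yxy)\| = \|yxy\|$ for all $x,y \in \aone$. The main obstacle is the normalization step $\phi(e) = e$; the triple-product norm identity alone does not pin down $\phi(e)$, and it is only through the interplay between the derived equality $\|\phi(y)\| = \|y\|$ and surjectivity evaluated at the specific element $\phi(e)^{-1}$ that one obtains control over $\|\phi(e)^{-1}\|$ and closes the argument via the spectrum.
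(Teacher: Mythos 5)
Your proof is correct, and the forward direction reaches $\phi=J$ by the same overall plan as the paper (derive $\phi(e)=e$, then invoke Lemma \ref{34} with $\psi=\phi$), but the normalization step $\phi(e)=e$ is done by a genuinely different argument. The paper observes that $\|\phi(y)\phi(e)\phi(y)\|=\|yey\|=\|\phi(y)^2\|=\|\phi(y)e\phi(y)\|$ for all $y\in\aone$ and then applies Lemma \ref{1} (the cancellation statement $\|zaz\|=\|za'z\|$ for all $z$ implies $a=a'$), using surjectivity of $\phi$ to let $\phi(y)$ range over all of $\atwo$; that route leans on \cite[Lemma 13]{cmm} through Lemma \ref{1}. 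You instead test the hypothesis \eqref{yyy} at the single point $y_0=\phi^{-1}(\phi(e)^{-1})$, play the two identities $\|\phi(e)^{-1}\|=\|y_0\|$ and $\|\phi(e)^{-1}\|=\|y_0\|^2$ against each other to get $\|\phi(e)\|=\|\phi(e)^{-1}\|=1$, and conclude $\sigma(\phi(e))=\{1\}$, hence $\phi(e)=e$ by functional calculus. Your version is more elementary and self-contained (it bypasses Lemma \ref{1} and the order-theoretic input behind it), at the cost of being slightly longer; the paper's version is shorter given that Lemma \ref{1} is already on the table and is the tool reused throughout Section \ref{s3}. All the individual steps in your argument check out: $\|\phi(y)^3\|=\|y^3\|$ does give $\|\phi(y)\|=\|y\|$ since both elements are positive, $\|y_0\|=\|y_0\|^2$ forces $\|y_0\|=1$ because $y_0$ is invertible, and a positive element whose spectrum is contained in both $(0,1]$ and $[1,\infty)$ is the unit. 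The converse direction is handled identically in both proofs.
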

\begin{proof}
    Letting $x=y$ in \eqref{yyy}, we have $\|\phi(y)^3\|=\|y^3\|$. 
    As $y$ and $\phi(y)$ are positive, we have $\|\phi(y)\|=\|y\|$. 
    Hence $\|\phi(y)^2\|=\|y^2\|$. 
    Thus 
    \[
    \|\phi(y)\phi(e)\phi(y)\|=\|yey\|=\|\phi(y)^2\|=\|\phi(y)e\phi(y)\|
    \]
    for every $y\in \aone$. As $\phi(\aone)=\atwo$, applying Lemma \ref{1} we have $\phi(e)=e$. Then by Lemma \ref{34} we observe that there exists a Jordan $*$-isomorphism $J\colon A_1\to A_2$ such that $\phi=J$ on $\aone$. 

    Conversely, as a Jordan $*$-isomorphism preserves the triple product and the norm, we see that 
    \[
    \|J(y)J(x)J(y)\|=\|J(yxy)\|=\|yxy\|
    \]
    for every $x,y\in \aone$.
\end{proof}

\begin{theorem}\label{3.6}
    Suppose that $\phi\colon \aone\to \atwo$ is a surjection. The following are equivalent:
    \begin{itemize}
        \item[(i)] There exists a Jordan $*$-isomorphism $J\colon A_1\to A_2$ such that $\phi=J$ on $\aone$;
        \item[(ii)] $\|\phi(x)\phi(y)\|=\|xy\|$, $x,y\in \aone$;
        \item[(iii)] $\|\phi(x)\phi(y)\|_S=\|xy\|_S$, $x,y\in \aone$; 
        \item[(iv)] $\sigma(\phi(x)\phi(y))=\sigma(xy)$, $x,y\in \aone$;
        \item[(v)] For a positive real number $p$, 
        
        $\|(\phi(x)^\frac{p}2\phi(y)^p\phi(x)^\frac{p}2)^\frac1p\|=\|(x^\frac{p}2y^px^\frac{p}2)^\frac1p\|$, $x,y\in \aone$.
       
    \end{itemize}
\end{theorem}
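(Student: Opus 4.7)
The plan is to prove (i)$\Rightarrow$(ii),(iii),(iv),(v) by direct computation from Jordan $*$-isomorphism properties, observe that (iv)$\Rightarrow$(iii) is immediate since $\|\cdot\|_S$ is determined by the spectrum, and then close the loop by showing (ii)$\Rightarrow$(i), (iii)$\Rightarrow$(i), and (v)$\Rightarrow$(i) individually via reductions to Lemma~\ref{34}.

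For (i)$\Rightarrow$(iv), a Jordan $*$-isomorphism $J$ gives
\[
\sigma(J(x)J(y)) = \sigma(J(x)^{1/2}J(y)J(x)^{1/2}) = \sigma(J(x^{1/2}yx^{1/2})) = \sigma(x^{1/2}yx^{1/2}) = \sigma(xy),
\]
using that $J$ preserves squaring on positive elements (hence square roots), the triple product, and the spectrum; (iii) follows by taking suprema of absolute values. For (ii), the computation $\|J(x)J(y)\|^2 = \|J(y)J(x)^2J(y)\| = \|J(yx^2y)\| = \|yx^2y\| = \|xy\|^2$ does the job. For (v), note that $J(x)^{p/2}J(y)^pJ(x)^{p/2} = J(x^{p/2}y^px^{p/2})$ (triple product plus $J(a^r) = J(a)^r$ for positive $a$), so norm preservation and continuous functional calculus yield the identity.

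For (ii)$\Rightarrow$(i), I first show $\phi(e) = e$. Setting $x = y$ in (ii) gives $\|\phi(x)\| = \|x\|$, and $y = e$ combined with the $C^*$-identity $\|\phi(x)\phi(e)\|^2 = \|\phi(e)\phi(x)^2\phi(e)\|$ yields $\|wbw\| = \|b\|$ for $w := \phi(e)$ and every $b \in \atwo^{-1}_+$, using that $\phi$ is surjective and squaring is a bijection of $\atwo^{-1}_+$. Substituting $b = w^{-2}$ gives $\|w^{-2}\| = 1$; since $w$ is positive, this together with $\|w\| = 1$ forces $\sigma(w) = \{1\}$, hence $w = e$. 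Next, define $\psi(x) := \phi(x^{1/2})^2$: this is a surjection on $\aone$ with $\psi(e) = e$ and $\psi(x^2) = \phi(x)^2$. Squaring the hypothesis of (ii) gives $\|\phi(y)\phi(x)^2\phi(y)\| = \|yx^2y\|$, so after substituting $x^2 \to x$ we obtain $\|\phi(y)\psi(x)\phi(y)\| = \|yxy\|$. Lemma~\ref{34} then produces a Jordan $*$-isomorphism $J$ with $\psi = J$ on $\aone$, so $\phi(x)^2 = J(x^2) = J(x)^2$, and positivity gives $\phi = J$.

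For (iii)$\Rightarrow$(i), I use $\|xy\|_S = \|y^{1/2}xy^{1/2}\|$ (valid because $\sigma(xy) = \sigma(y^{1/2}xy^{1/2}) \subset (0,\infty)$ and $y^{1/2}xy^{1/2}$ is positive). Setting $\tilde\phi(y) := \phi(y^2)^{1/2}$ and applying (iii) yields
\[
\|\tilde\phi(y)\phi(x)\tilde\phi(y)\| = \|\phi(x)\phi(y^2)\|_S = \|xy^2\|_S = \|yxy\|,
\]
and $\phi(e) = e$ follows by an analogous argument to that in (ii), so Lemma~\ref{34} applies and gives $\phi = J$. For (v)$\Rightarrow$(i), positivity of $x^{p/2}y^px^{p/2}$ together with $\sigma(x^{p/2}y^px^{p/2}) = \sigma(x^py^p)$ converts (v) into $\|\phi(x)^p\phi(y)^p\|_S = \|x^py^p\|_S$; setting $\phi_p(x) := \phi(x^{1/p})^p$ (a surjection on $\aone$) and substituting $x \to x^{1/p}$, $y \to y^{1/p}$ shows that $\phi_p$ satisfies (iii). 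The already-proved implication (iii)$\Rightarrow$(i) then produces a Jordan $*$-isomorphism $\tilde J$ with $\phi_p = \tilde J$; thus $\phi(x)^p = \tilde J(x^p) = \tilde J(x)^p$, and taking $p$-th roots yields $\phi = \tilde J$. The main obstacle I anticipate is the careful bookkeeping to verify $\phi(e) = e$ in cases (ii) and (iii), since Lemma~\ref{34} requires this normalization to apply.
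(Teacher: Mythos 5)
Your proof is correct and follows essentially the same route as the paper: all of (ii), (iii), (v) are funneled into Lemma~\ref{34} after normalizing $\phi(e)=e$, with the direct computations for (i)$\Rightarrow$(ii),(iv),(v) matching the paper's. The only real deviation is that you verify $\phi(e)=e$ by the self-contained spectral argument $\|w\|=\|w^{-1}\|=1\Rightarrow\sigma(w)=\{1\}$, whereas the paper invokes Lemma~\ref{1} and Corollary~\ref{3} (which rest on \cite[Lemma 13]{cmm}); both are valid, and your variant is slightly more elementary at that step.
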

Note that the equivalence between (i) and (ii) is already pointed out just after Theorem 25 (\cite[p.36]{molcon}). For clarity, we include proof. 
\begin{proof}
We prove that (i) implies (ii). Suppose that there exists a Jordan $*$-isomorphism $J$ such that $\phi=J$ on $\aone$. Since $J$ preserves squaring, the involution, the triple product, and the norm, we obtain 
\begin{multline*}
\|\phi(x)\phi(y)\|^2=\|J(x)J(y)\|^2
=\|(J(x)J(y))^*J(x)J(y)\|
\\
=\|J(y)J(x^2)J(y)\|
=\|J(yx^2y)\|
=\|yx^2y\|=\|xy\|^2, 
\end{multline*}
so (ii) holds. 

We prove that (ii) implies (i). 
Suppose that $\|\phi(x)\phi(y)\|=\|xy\|$ holds for every $x,y\in \aone$. Letting $x=y$ we obtain $\|y^2\|=\|\phi(y)^2\|$, hence $\|y\|=\|\phi(y)\|$ since $y$ and $\phi(y)$ are self-adjoint. Then, we have
\[
\|\phi(e)\phi(y)\|=\|ey\|=\|y\|=\|\phi(y)\|=\|e\phi(y)\|
\]
for every $y\in \aone$. As $\phi$ is a surjection we infer by Lemma \ref{1} that $\phi(e)=e$. Put $\psi(x)=\phi(x^\frac12)^2$ for $x\in \aone$.  
 It is apparent that $\psi$ is a surjection. We infer that 
 $\psi(e)=\phi(e^\frac12)^2=\phi(e)^2=e^2=e$. 
Let $x,y$ be arbitrary in $\aone$. Since $yxy=(x^\frac12y)^*x^\frac12y$ and 
\[
(\phi(x^\frac12)\phi(y))^*\phi(x^\frac12)\phi(y)=\phi(y)\phi(x^\frac12)^2\phi(y)=\phi(y)\psi(x)\phi(y)
\]
hold, we have
\begin{equation}\label{yxy}
\|yxy\|=\|x^\frac12y\|^2=\|\phi(x^\frac12)\phi(y)\|^2=\|\phi(y)\psi(x)\phi(y)\|.
\end{equation}
 Then by Lemma \ref{34}
there exists a Jordan $*$-isomorphism $J$ such that $J=\psi$ on $\aone$. We infer by the definition of $\psi$ that $\phi=J$ on $\aone$.

It is apparent that (iv) implies (iii). 

Suppose that (iii) holds. We prove (i). We show that $\phi(e)=e$. Letting $x=y$ in (iii) we have $\|\phi(y)^2\|_S=\|y^2\|_S$. By the spectral mapping theorem, we have $\sigma (z^2)=\{t^2\colon t\in \sigma(z)\}$, so that $\|y^2\|_S=\|y\|_S^2$ and $\|\phi(y)^2\|_S=\|\phi(y)\|_S^2$. Thus we have 
$\|y\|_S=\|\phi(y)\|_S$. 
Thus we have 
\[
\|e\phi(y)\|_S=\|\phi(y)\|_S=\|y\|_S=\|ey\|_S=\|\phi(e)\phi(y)\|_S
\]
for every $y\in \aone$. 
By applying Corollary \ref{3} we have $\phi(e)=e$. 
Since $\sigma(xy)=\sigma(y^\frac12xy^\frac12)$, we have
\begin{equation}\label{ssnashi}
\|xy\|_S=\|y^\frac12xy^\frac12\|_S=\|y^\frac12xy^\frac12\|.
\end{equation}
Similarly, we obtain
\begin{equation}\label{ssnashi2}
\|\phi(x)\phi(y)\|_S=\|\phi(y)^\frac12\phi(x)\phi(y)^\frac12\|.
\end{equation}
Letting $\varphi(y)=\phi(y^2)^\frac12$ for each $y\in \aone$, $\varphi\colon \aone\to \atwo$ is a surjection. By \eqref{ssnashi} and \eqref{ssnashi2} we observe that
\[
\|yxy\|=\|\varphi(y)\phi(x)\varphi(y)\|,\quad x,y\in \aone. 
\]
Then by Lemma \ref{34} there exists a Jordan $*$-isomorphism $J$ such that $J=\phi$ on $\aone$.

A proof that (i) implies (iv) is as follows. As a Jordan $*$-isomorphism $J$ preserves squaring,  the triple product, and the spectrum we have
 \begin{multline*} \sigma(J(x)J(y))=\sigma(J(x)J(y^\frac12)^2)=\sigma(J(y^\frac12)J(x)J(y^\frac12))
 \\
=\sigma(J(y^\frac12xy^\frac12))=\sigma(y^\frac12xy^\frac12)=\sigma(xy).
 \end{multline*}

We prove that (v) implies (i). Let $p=1$ first. 
 As $x^\frac12 yx^\frac12\in \aone$ for every $x,y\in \aone$, we infer
 $\|x^\frac12yx^\frac12\|=\|x^\frac12yx^\frac12\|_S$.  
 We also have 
 $\sigma(x^\frac12yx^\frac12)=\sigma(xy)$
 for every $x,y\in \aone$. Hence, we have
 $\|x^\frac12yx^\frac12\|=\|xy\|_S$. Similarly, 
 we have  $\|\phi(x)^\frac12\phi(y)\phi(x)^\frac12\|=\|\phi(x)\phi(y)\|_S$. Hence, (v) is equivalent to (iii) if $p=1$. 
 We consider a general positive real number $p$. 
 Put $\psi(x)=\phi(x^\frac1p)^p$, $x\in \aone$. By a simple calculation, substituting $x$ by $x^\frac1p$ and $y$ by $y^\frac1p$ in (v), we obtain
 \begin{multline}\label{1p}
 \|(\psi(x)^\frac12\psi(y)\psi(x)^\frac12)^\frac1p\|=
 \|(\phi(x^\frac1p)^\frac{p}2\phi(y^\frac1p)^p\phi(x^\frac1p)^\frac{p}2)^\frac1p\|
 \\
 =
 \|(x^\frac12yx^\frac12)^\frac1p\|, \quad x,y\in \aone.
 \end{multline}
By the spectral mapping theorem we have $\sigma(a^\frac1p)=\sigma(a)^\frac1p$ for every positive invertible element $a$, we have 
\[
\|a^\frac1p\|=\|a^\frac1p\|_S=\|a\|_S^\frac1p=\|a\|^\frac1p
\]
for every positive invertible element $a$. By \eqref{1p} we have
\[
\|\psi(x)^\frac12\psi(y)\psi(x)^\frac12\|=
\|x^\frac12yx^\frac12\|,\quad x,y\in \aone.
\]
It follows from the first part that 
\[
\|\psi(x)\psi(y)\|_S=\|xy\|_S,\quad x,y\in \aone, 
\]
(iii) for $\psi$ is observed. As we have already proven that (iii) and (i) are equivalent, a Jordan $*$-isomorphism $J$ exists, such as $\psi=J$. Then $\phi(x)=\psi(x^p)^\frac1p=J(x^p)^\frac1p=J(x)$, $x\in \aone$; (i) holds for $\phi$. Suppose that $\phi=J$ on $\aone$. Since a Jordan $*$-isomorphism $J$ preserves the power and the triple product, we see that 
\[
(\phi(x)^\frac{p}2\phi(y)^p\phi(x)^\frac{p}2)^\frac1p=
{(J(x^\frac{p}2y^px^\frac{p}2))}^\frac1p,
\]
and $J$ preserves the norm. Thus, we have (v).

\end{proof}
\begin{theorem}\label{huna}
    Let $\phi_j\colon \aone\to \atwo$ be a bijection for $j=1,2$. 
    Suppose that 
    \[
    \|\phi_1(x)\phi_2(y)\|=\|xy\|,\qquad x,y\in \aone. 
    \]
    Suppose also that $\phi_1(e)=e$. Then, there exists a Jordan $*$-isomorphism $J\colon A_1\to A_2$ such that $\phi_1=\phi_2=J$ on $\aone$.
\end{theorem}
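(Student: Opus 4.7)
The plan is two-step: first identify $\phi_1$ as (the restriction of) a Jordan $*$-isomorphism by invoking Lemma~\ref{34}, then bootstrap to force $\phi_2$ to coincide with the same map via Lemma~\ref{1}.

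To set up Lemma~\ref{34}, I would exploit the $C^*$-identity to rewrite the hypothesis in triple-product form. Since $\phi_1(x)$, $\phi_2(y)$, $x$, and $y$ are all self-adjoint,
\[
\|\phi_1(x)\phi_2(y)\|^2 = \|\phi_2(y)\phi_1(x)^2\phi_2(y)\|, \qquad \|xy\|^2 = \|y x^2 y\|,
\]
so the hypothesis becomes $\|\phi_2(y)\phi_1(x)^2\phi_2(y)\| = \|y x^2 y\|$. Substituting $x \mapsto x^{1/2}$ and introducing the bijection $\tilde\phi_1 \colon \aone \to \atwo$ defined by $\tilde\phi_1(x) := \phi_1(x^{1/2})^2$, this reads $\|\phi_2(y)\tilde\phi_1(x)\phi_2(y)\| = \|y x y\|$ for all $x, y \in \aone$. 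The normalization $\tilde\phi_1(e) = \phi_1(e)^2 = e$ is free from the assumption $\phi_1(e) = e$, so Lemma~\ref{34} yields a Jordan $*$-isomorphism $J \colon A_1 \to A_2$ with $\tilde\phi_1 = J$ on $\aone$. Because $J$ preserves squaring and both $\phi_1(x)$ and $J(x)$ are positive, taking positive square roots in $\phi_1(x)^2 = \tilde\phi_1(x^2) = J(x^2) = J(x)^2$ gives $\phi_1 = J$ on $\aone$.

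Plugging $\phi_1 = J$ back into the hypothesis and using that $J$ preserves the triple product and the norm,
\[
\|\phi_2(y) J(x)^2 \phi_2(y)\| = \|y x^2 y\| = \|J(y) J(x)^2 J(y)\|.
\]
As $x$ ranges over $\aone$, the element $J(x)^2$ sweeps out all of $\atwo$, so $\|\phi_2(y) a \phi_2(y)\| = \|J(y) a J(y)\|$ for every $a \in \atwo$ and every $y \in \aone$. Taking $a = z^2$ with $z \in \atwo$ and applying the $C^*$-identity once more converts this to $\|z\phi_2(y)\| = \|z J(y)\|$ for all $z \in \atwo$; passing to adjoints (self-adjointness of $z$, $\phi_2(y)$, $J(y)$) and invoking Lemma~\ref{1}(ii) yields $\phi_2(y) = J(y)$, so $\phi_2 = J$ as well. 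The main (mild) obstacle is spotting the substitution $x \mapsto x^{1/2}$ that shoehorns the hypothesis into the triple-product form required by Lemma~\ref{34}; everything downstream is routine manipulation with self-adjoint positive invertible elements and the standard preservation properties of Jordan $*$-isomorphisms.
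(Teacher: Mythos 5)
Your proof is correct. The first half is essentially identical to the paper's: both rewrite the hypothesis via the $C^*$-identity as $\|\phi_2(y)\phi_1(x^{1/2})^2\phi_2(y)\|=\|yxy\|$, pass to $\tilde\phi_1(x)=\phi_1(x^{1/2})^2$ with $\tilde\phi_1(e)=e$, and invoke Lemma~\ref{34} to get $\phi_1=J$. Where you diverge is in handling $\phi_2$. The paper first proves $\phi_2(e)=e$ (via $\|\phi_2(e)J(x)\|=\|eJ(x)\|$ and Lemma~\ref{1}), then reruns the entire Lemma~\ref{34} argument symmetrically to produce a second Jordan $*$-isomorphism $J'$ with $\phi_2=J'$, and finally reconciles $J'=J$ by the computation $\|J(x)J'(y)\|=\|J(x)J(y)\|$ together with the cancellation lemma. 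You short-circuit all of this: since $\{J(x)^2\colon x\in\aone\}=\atwo$, the identity $\|\phi_2(y)\,a\,\phi_2(y)\|=\|J(y)\,a\,J(y)\|$ holds for all $a\in\atwo$, and specializing $a=z^2$ reduces it via the $C^*$-identity to $\|\phi_2(y)z\|=\|J(y)z\|$ for all $z\in\atwo$, whence $\phi_2(y)=J(y)$ by Lemma~\ref{1}(ii). This is a genuine economy: you apply Lemma~\ref{34} only once, never need the intermediate normalization $\phi_2(e)=e$, and never introduce a second Jordan $*$-isomorphism. The paper's version has the mild virtue of making the symmetry between $\phi_1$ and $\phi_2$ explicit, but yours is shorter and equally rigorous.
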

\begin{proof}
    We have
    \begin{multline}\label{sarudayosaru}
\|\phi_2(y)\phi_1(x^\frac12)^2\phi_2(y)\|=\|\phi_1(x^\frac12)\phi_2(y)\|^2
\\
=\|x^\frac12y\|^2=\|yxy\|, \quad x,y\in \aone.
    \end{multline}
    Put $\psi(x)=\phi_1(x^\frac12)^2$, $x\in \aone$.  Then $\psi\colon \aone\to \atwo$ is a bijection. 
    Apparently,  $\psi(e)=e$. Applying Lemma \ref{34} there is a Jordan $*$-isomorphism $J\colon A_1\to A_2$ such that $\phi_1=J$ on $\aone$. 
        Then we have
    \begin{multline*}
    \|\phi_2(e)J(x)\|=\|(\phi_2(e)J(x))^*\|
    \\
    =
    \|J(x)\phi_2(e)\|=\|xe\|=\|x\|=\|J(x)\|=\|eJ(x)\|
    \end{multline*}
for every $x\in \aone$ we have $e=\phi_2(e)$ as $J$ is surjective. Similarly as $\phi_1$, there is a Jordan $*$-isomorphism $J'\colon \aone\to \atwo$ such that $\phi_2=J'$ on $\aone$. Let $y\in \aone$ be fixed. Then for every $x\in \aone$ we have
\begin{multline*}
\|J(x)J'(y)\|^2=\|xy\|^2=\|xy^2x\|=\|J(xy^2x)\|
\\
=\|J(x)J(y)^2J(x)\|=\|J(x)J(y)\|^2.
\end{multline*}
Then $J'(y)=J(y)$ by Lemma \ref{2}. As $y$ can be arbitrary we conclude
that $J'(y)=J(y)$ for every $y\in \aone$. Thus $\phi_1=\phi_2=J$ on $\aone$. 
\end{proof}
Note that the assumption $\phi_1(e)=e$  is essential in Theorem \ref{huna} as the following example shows.
\begin{example}
    Suppose that $a\in \aone$ is not a central element and $J\colon A_1\to A_2$ is a Jordan $*$-isomorphism. Let $\phi_1\colon \aone \to \atwo$ be defined as $\phi_1(x)=(aJ(x)^2a)^\frac12$, $x\in \aone$ and $\phi_2(x)=(a^{-1}J(x)^2a^{-1})^\frac12$, $x\in \aone$. 
    We infer that $\phi_1(e)=a$ and  $\phi_1(x)=(\phi_1(e)J(x)^2\phi_1(e))^\frac12$. We also have  $\phi_2(y)=\phi_1(y^{-1})^{-1}$, $y\in \aone$. Hence, we have by Theorem \ref{qim} that 
    \[
\|\phi_1(x)\phi_2(y)\|=\|\phi_1(x)\phi_1(y^{-1})^{-1}\|
=\|x(y^{-1})^{-1}\|=\|xy\| 
    \]
    for every $x,y\in \aone$.
    As $a$ is not a central element, $\phi_1$ nor $\phi_2$ are not additive on $\aone$ according to Theorem \ref{qim}. Hence, $\phi_1$ nor $\phi_2$ are not Jordan $*$-isomorphisms. 
\end{example}
\section{Conditions for centrality of elements in the positive definite cones}\label{central}
In this section, we study certain conditions for the centrality of elements in the positive definite cones
The following is a version of Corollary 5 in \cite{kad2}. 
We can prove this by applying Corollary 5 in \cite{kad2}. We can also prove this using Theorem \ref{thecor}. 
Here we provide a proof as a corollary of Proposition 1 in \cite{mol17}. 
\begin{proposition}\label{additive}
Let $T\colon \aone\to \atwo$ be a bijection. 
    Suppose that $T$ is additive; i.e., $T(a+b)=T(a)+T(b)$ for all $a,b\in \aone$. 
    Then there exists a Jordan $*$-isomorphism $J\colon A_1\to A_2$ such that $T(a)=T(e)^{\frac12}J(a)T(e)^{\frac12}$ for every $a\in \aone$.
\end{proposition}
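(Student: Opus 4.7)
The plan is to verify the hypotheses of Theorem \ref{thecor} and then invoke it directly. That is, I would show that $T\colon\aone\to\atwo$ is a positive homogeneous order isomorphism; once this is established, "the corollary" immediately yields a Jordan $*$-isomorphism $J\colon A_1\to A_2$ with $T=T(e)^{\frac12}JT(e)^{\frac12}$ on $\aone$.

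First, since $T$ is an additive bijection, $T^{-1}$ is also additive on $\atwo$. A standard doubling/averaging argument based on additivity gives rational positive homogeneity: $T(na)=nT(a)$ for $n\in\mathbb{N}$, hence $T(a/n)=T(a)/n$, so $T(qa)=qT(a)$ for every $q\in\mathbb{Q}_{>0}$ and $a\in\aone$, and similarly for $T^{-1}$.

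Next, I would establish order preservation in both directions. The strict case is immediate: if $b-a\in\aone$, then $T(b)=T(a)+T(b-a)\ge T(a)$, because $T(b-a)\in\atwo$ is in particular positive. For a general inequality $a\le b$ in $\aone$, I would exploit that $b+\epsilon e-a=(b-a)+\epsilon e\in\aone$ for every $\epsilon>0$. Taking $\epsilon$ rational and combining additivity with rational homogeneity yields
\[
T(b)+\epsilon T(e)=T(b+\epsilon e)=T(a)+T(b+\epsilon e-a)\ge T(a),
\]
and letting $\epsilon\to 0^+$ along the positive rationals forces $T(b)\ge T(a)$. Applying the same argument to $T^{-1}$ gives order preservation in both directions, hence $T$ is an order isomorphism.

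Finally, rational positive homogeneity together with monotonicity upgrades to real positive homogeneity by squeezing: for $t>0$ and rationals $q_n\uparrow t$, $r_n\downarrow t$, one has $q_nT(a)=T(q_na)\le T(ta)\le T(r_na)=r_nT(a)$, forcing $T(ta)=tT(a)$. Thus $T$ is a positive homogeneous order isomorphism, and Theorem \ref{thecor} gives the desired conclusion. The one step requiring a little care is the passage from strict to non-strict monotonicity, i.e.\ the case where $b-a$ is only positive semidefinite; the $\epsilon e$ perturbation trick above resolves this cleanly, and the remainder of the argument is a routine descent from additivity to continuous homogeneity through the established order structure.
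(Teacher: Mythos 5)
Your proof is correct, and it takes a genuinely different route from the paper's. The paper argues in two lines: additivity gives $T(a/2)=T(a)/2$, hence $T$ preserves the arithmetic mean, and then the conclusion is read off from Moln\'ar's result on arithmetic-mean-preserving bijections (Proposition 1 in \cite{mol17}). You instead verify the hypotheses of Theorem \ref{thecor} directly: additivity yields rational homogeneity, the decomposition $T(b)=T(a)+T(b-a)$ gives monotonicity when $b-a$ is invertible, the $\epsilon e$ perturbation (together with norm-closedness of the positive cone) extends this to arbitrary $a\le b$, the same applies to $T^{-1}$ since the inverse of an additive bijection is additive, and squeezing between rationals upgrades rational to real homogeneity. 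Interestingly, the paper itself remarks that the proposition "can also be proved using Theorem \ref{thecor}" without carrying it out --- you have supplied exactly that alternative. What your argument buys is self-containedness within the paper's main tool (Theorem \ref{thecor}, which is used throughout anyway) at the cost of a page of routine order-theoretic bookkeeping; what the paper's argument buys is brevity, at the cost of importing a second external characterization result. All the individual steps you flag as delicate (the passage from strict to non-strict monotonicity, the limit along rational $\epsilon$) are handled correctly.
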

\begin{proof}
We prove that $T$ preserves the arithmetic mean; $T\left(\frac{a+b}{2}\right)=\frac{T(a)+T(b)}{2}$ for every $a,b\in \aone$. 
First, for any $a\in \aone$ we have $T(a)=T(\frac{a}{2}+\frac{a}{2})=2T(\frac{a}{2})$. Hence $\frac{T(a)}{2}=T(\frac{a}{2})$ for every $a\in \aone$. It follows that 
\[
T\left(\frac{a+b}{2}\right)=\frac12(T(a+b))=\frac{T(a)+T(b)}{2}
\]
for every $a,b\in \aone$. Thus $T$ preserves the arithmetic mean. Then by \cite[Proposition 1]{mol17} that $T$ is of the desired form. 
\end{proof}
Recall that the geometric mean $x\#y$ of $x,y\in A_+^{-1}$ is    
\[
x\#y=x^\frac12(x^{-\frac12}yx^{-\frac12})^\frac12x^\frac12.
\]
Let $a\in A_+^{-1}$ and $\phi(x)=a^2\#x^2$, $x\in A_+^{-1}$. Then $\phi$ is a bijection from $A_+^{-1}$ onto itself. 
Note that if $A$ is commutative, then $a^2\#x^2=ax$. It means that $\phi$ is additive if $A$ is commutative. We have the converse. 
Suppose that $\phi$ is additive. Then $a^{-1}\phi(x) a^{-1}=(a^{-1}x^2a^{-1})^\frac12$  defines an additive bijection from $A_+^{-1}$ onto itself. Then by Proposition \ref{ax2a}, $a^{-1}$ is central, hence so is $a$. 
Similarly, it occurs for $(b\#x)^2$. 
If $A$ is commutative, then $(b\#x)^2=bx$ for every $b,x\in A_{+}^{-1}$. 
 On the other hand, we have the following. 
\begin{corollary}
Let $b\in A_+^{-1}$. 
    Suppose that a map $\phi\colon A_+^{-1}\to A_+^{-1}$ which is defined as $\phi(x)=(b\#x)^2$, $x\in A_+^{-1}$ is additive. Then $b$ is a central element in $A$.
\end{corollary}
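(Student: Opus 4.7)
The plan is to translate the additivity of $\phi$ into a clean algebraic identity on $\aplumi$, extract a double-commutator relation on $b$, and then conclude via a short derivation argument.

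First, the substitution $u=b^{-\frac12}xb^{-\frac12}$ is a linear bijection of $\aplumi$ onto itself, and the formula for the geometric mean gives $b\#x=b^{\frac12}u^{\frac12}b^{\frac12}$, so
\[
\phi(x)=(b\#x)^2=b^{\frac12}u^{\frac12}\,b\,u^{\frac12}b^{\frac12}.
\]
Because sandwiching by $b^{\frac12}$ and the substitution $x\leftrightarrow u$ are both linear bijections, additivity of $\phi$ is equivalent to additivity of $G(u):=u^{\frac12}bu^{\frac12}$ on $\aplumi$; that is,
\[
(u_1+u_2)^{\frac12}\,b\,(u_1+u_2)^{\frac12}=u_1^{\frac12}bu_1^{\frac12}+u_2^{\frac12}bu_2^{\frac12},\qquad u_1,u_2\in\aplumi.
\]

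Second, I would specialize to $u_1=e$ and $u_2=\epsilon w$ with $w\in\aplumi$ and $\epsilon>0$ small, and use the continuous functional calculus expansion $(e+\epsilon w)^{\frac12}=e+\tfrac{\epsilon}{2}w+O(\epsilon^2)$. Matching coefficients of $\epsilon$ on the two sides of the identity yields $wb+bw=2w^{\frac12}bw^{\frac12}$, and the bijective substitution $w=v^2$ on $\aplumi$ converts this into $v^2b+bv^2=2vbv$, i.e.,
\[
[v,[v,b]]=0,\qquad v\in\aplumi.
\]
Since $e$ commutes with $b$, replacing $v$ by $v+te$ with $t>\|v\|$ extends the identity to every self-adjoint $v$; polarizing via $v=v_1+v_2$ then produces $[v_1,[v_2,b]]+[v_2,[v_1,b]]=0$ for all self-adjoint $v_1,v_2$. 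Choosing $v_1=b$ collapses this to $[b,[b,v_2]]=0$ for every self-adjoint $v_2$, and hence for every $v_2\in A$ by $\C$-linearity.

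Finally, writing $\delta=[b,\,\cdot\,]$, the condition $\delta^2=0$ combined with the Leibniz rule applied to $\delta^2(xy)=2\delta(x)\delta(y)$ forces $\delta(x)\delta(y)=0$ for all $x,y\in A$; in particular $\delta(x)^2=0$. For self-adjoint $x$, the element $\delta(x)=[b,x]$ is anti-self-adjoint, so $i\delta(x)$ is a self-adjoint element with vanishing square, and hence is zero in the $C^*$-algebra. Thus $\delta\equiv 0$, i.e., $b$ is central. The main technical point is the justification of the $O(\epsilon)$-coefficient extraction in the second step; the remaining commutator manipulations and the final derivation argument are routine.
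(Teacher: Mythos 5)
Your proof is correct, and it takes a genuinely different route from the paper's. The paper first notes that $\phi$ is a bijection, invokes Proposition \ref{additive} (hence Moln\'ar's representation of additive bijections) to write $\phi=b^{\frac12}Jb^{\frac12}$ for a Jordan $*$-isomorphism $J$, and then combines the explicit formula for $J$ with the triple-product identity to show that $b^{\frac12}x^{\frac12}b^{-\frac12}$ is normal with positive spectrum, hence self-adjoint, which yields centrality. You avoid the representation theorem entirely: the change of variables $u=b^{-\frac12}xb^{-\frac12}$ reduces additivity of $\phi$ to additivity of $u\mapsto u^{\frac12}bu^{\frac12}$; the first-order expansion at $u_1=e$, $u_2=\epsilon w$ (justified by the norm-convergent binomial series for $(e+\epsilon w)^{\frac12}$, so the $O(\epsilon^2)$ remainder is controlled and the coefficient extraction is legitimate) gives $wb+bw=2w^{\frac12}bw^{\frac12}$; the substitution $w=v^2$ turns this into $[v,[v,b]]=0$, and the polarization, the specialization $v_1=b$, and the derivation identity $\delta^2(xy)=2\delta(x)\delta(y)$ for $\delta=[b,\cdot\,]$ finish the job. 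Your argument is more elementary and self-contained (no appeal to Kadison-type results hidden inside Proposition \ref{additive}), and it does not even need $\phi$ to be a bijection, only additive; it is structurally closer in spirit to the paper's Proposition \ref{ax2a} (deriving a pointwise identity and then a commutation relation) than to the paper's actual proof of this corollary, though the identity you extract and the concluding commutator/derivation argument are different from the normality-plus-spectrum argument used there. Both approaches are valid; yours trades the representation-theoretic input for a short perturbation computation.
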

\begin{proof}
     We infer that $\phi$ is a bijection by a simple calculation. We have $\phi(e)=(b\# e)^2=b$. Hence, by Proposition \ref{additive} there exists a Jordan $*$-isomorphism $J$ such that $\phi=b^{\frac12}Jb^{\frac12}$. 
     As $b\#x=b^\frac12(b^{-\frac12}xb^{-\frac12})^\frac12 b^\frac12$ we have
     \[
     J(x)=(b^{-\frac12}xb^{-\frac12})^\frac12b(b^{-\frac12}xb^{-\frac12})^\frac12, \quad x\in A_+^{-1}.
     \]
     As a Jordan $*$-isomorphism preserves the triple product, we have
     \begin{equation}\label{bjb}    J(b^\frac12)J(x)J(b^\frac12)=J(b^\frac12xb^\frac12)=x^\frac12 b x^\frac12, \quad x\in A_+^{-1}.
     \end{equation}
     On the other hand, as we have
     \[
     b^\frac12J(b)b^\frac12=\phi(b)=(b\# b)^2=b^2,    
     \]
     we obtain $J(b)=b$, hence $J(b^\frac12)=b^\frac12$. 
     By \eqref{bjb}      
     we obtain 
     \begin{equation}\label{h3}     b^\frac12J(x)b^\frac12=x^\frac12bx^\frac12
     \end{equation}
     for every $x\in A_+^{-1}$. Substituting $x^{-1}$ instead of $x$ we have \begin{equation}\label{j3}     b^\frac12J(x)^{-1}b^\frac12=b^\frac12J(x^{-1})b^\frac12=x^{-\frac12}bx^{-\frac12}. 
     \end{equation}
     Hence
     \begin{equation}\label{j4}
     b^{-\frac12}J(x)b^{-\frac12}=x^\frac12b^{-1}x^\frac12.
     \end{equation}
     By \eqref{h3} and \eqref{j4} we have
     \[
     b^{-\frac12}x^\frac12bx^\frac12b^{-\frac12}=b^\frac12x^\frac12b^{-1}x^\frac12b^\frac12,
     \]
     so
     \[
     (b^\frac12x^\frac12b^{-\frac12})^*(b^\frac12x^\frac12b^{-\frac12})
     =
     (b^\frac12x^\frac12b^{-\frac12})(b^\frac12x^\frac12b^{-\frac12})^*.
     \]
     This means that $b^\frac12x^\frac12b^{-\frac12}$ is normal. 
     Alongside the equation    
     \[
     \sigma(b^\frac12x^\frac12b^{-\frac12})=\sigma(x^{\frac12})\subset (0,\infty),
     \]
     we observe that $b^\frac12x^\frac12b^{-\frac12}$ is self-adjoint, 
     similar to the manner highlighted in the proof of Proposition \ref{ax2a}.
      Hence, we have
     \[
     b^\frac12x^\frac12b^{-\frac12}=(b^\frac12x^\frac12b^{-\frac12})^*=b^{-\frac12}x^\frac12b^\frac12.
     \]
     It follows that $bx^\frac12=x^\frac12b$ holds for every $x$. Then $b$ is a central element in $A$.
\end{proof}
We present a proof of a localized version of Ogasawara's theorem \cite{oga} concerning commutativity. This result is a specific instance of a theorem of Nagy \cite[Theorem 1]{nagy} and Virosztek \cite[Theorem 1]{virosz} applied to the squaring functions, addressing the local monotonicity of a strictly convex increasing function. 
Our presentation provides a concise and straightforward direct proof, despite the application of two profound results: the Russo and Dye theorem \cite[Corollary 1]{rd} and Kadison's generalized Schwarz lemma \cite[Theorem 1]{kad2}.

\begin{proposition}\label{lo}
    Let $a\in A_+^{-1}$. Then $a$ is central if and only if $a^2\le x^2$ holds for every $x\in A_+^{-1}$ with $a\le x$.
\end{proposition}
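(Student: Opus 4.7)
The forward direction is immediate: if $a$ is central and $x \in A_+^{-1}$ with $x \ge a$, then $a$ commutes with $x$, so the positive elements $x-a$ and $x+a$ commute and their product $x^2 - a^2 = (x-a)(x+a)$ is positive.

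For the converse, my plan is first to extract an infinitesimal consequence of the hypothesis. For every $b \in A_+$ and $t > 0$, the element $x = a + tb$ lies in $A_+^{-1}$ (since $x \ge a$ and $a$ is invertible) and satisfies $x \ge a$, so the hypothesis gives $(a + tb)^2 \ge a^2$, i.e., $t(ab + ba) + t^2 b^2 \ge 0$. Dividing by $t$ and letting $t \to 0^+$ yields $ab + ba \ge 0$ for every $b \in A_+$.

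Next I introduce the $*$-linear map $\phi \colon A \to A$ defined by
\[
\phi(b) = \tfrac{1}{2}\bigl(a^{1/2} b a^{-1/2} + a^{-1/2} b a^{1/2}\bigr) = \tfrac{1}{2} a^{-1/2}(ab + ba) a^{-1/2}.
\]
This is unital ($\phi(e) = e$), and the previous step gives $\phi(b) \ge 0$ whenever $b \ge 0$. Kadison's generalized Schwarz inequality therefore gives $\phi(b^2) \ge \phi(b)^2$ for every self-adjoint $b \in A$.

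The crux is to recognize $\phi(b^2) - \phi(b)^2$ as (a positive conjugate of) $[a, b] a^{-1}[a, b]$. Conjugating the Kadison inequality by $a^{1/2}$ and expanding $(ab + ba) a^{-1}(ab + ba)$ using $a^{-1} a = e$, the inequality reduces to $ab^2 + b^2 a - aba^{-1}ba - bab \ge 0$; and this quantity equals $(ab - ba) a^{-1}(ab - ba) = [a, b] a^{-1}[a, b]$. On the other hand, since $[a, b]^* = -[a, b]$ for self-adjoint $a, b$, we have
\[
[a, b] a^{-1}[a, b] = -\bigl(a^{-1/2}[a, b]\bigr)^*\bigl(a^{-1/2}[a, b]\bigr) \le 0.
\]
Combining, $[a, b] a^{-1}[a, b] = 0$, which forces $a^{-1/2}[a, b] = 0$, hence $[a, b] = 0$. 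As $b$ was an arbitrary self-adjoint element and $A$ is spanned over $\mathbb{C}$ by its self-adjoints, $a$ is central. The main bookkeeping challenge is the identification of the Kadison defect with the commutator expression $[a, b] a^{-1}[a, b]$, but this amounts to a direct expansion.
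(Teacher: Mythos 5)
Your argument is correct and follows essentially the same route as the paper's: the infinitesimal perturbation $x=a+tb$ to get $ab+ba\ge 0$, the unital positive map $b\mapsto \tfrac12(a^{1/2}ba^{-1/2}+a^{-1/2}ba^{1/2})$, Kadison's generalized Schwarz inequality, and the identification of the defect with $[a,b]a^{-1}[a,b]$, which is simultaneously $\ge 0$ and $\le 0$ by skew-adjointness of the commutator. The only detail the paper makes explicit that you elide is the appeal to Russo--Dye to ensure the map has norm one before applying Kadison's inequality.
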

\begin{proof} If $a$ is central, it is apparent that $a^2\le x^2$ holds for every $x\in A_+^{-1}$ with $a\le x$.

We prove the converse. Suppose that $a\in A_+^{-1}$ which satisfies that $a^2\le x^2$ holds for every $x\in A_+^{-1}$ with $a\le x$.
    We first prove $0\le ab+ba$ for every $b \in A$ with $0\le b$. For any $0<t\le 1$ we have $a\le (1-t)a+t(a+b)$. By the hypothesis, we have $a^2\le ((1-t)a+t(a+b))^2$. By calculation, we obtain that 
    \[
    (2-t)a^2\le (1-t)(2a^2+ab +b a)+t(a+b)^2. 
    \]
    Letting $t\to 0$, we have
    \[
    0\le ab +b a.
    \]
    Define a map $T\colon A\to A$ as $T(y)=\frac12(a^\frac12ya^{-\frac12}+a^{-\frac12}y a^\frac12)$, $y\in A$. 
    Then $T$ is positive since $T(b)=\frac12a^{-\frac12}(ab+b a)a^{-\frac12}\ge 0$ for any $b \ge0$. We also have $T(e)=e$. 
        By a theorem of Russo and Dye \cite[Corollary 1]{rd}, we see that $\|T\|=1$.
        Kadison's generalized Schwarz lemma \cite[Theorem 1]{kad2} asserts that $T(x^2)\ge T(x)^2$ for each self-adjoint element $x$. After calculating $T(x^2)$ and $T(x)^2$, we can observe that
        \[
        a^\frac12x^2a^{-\frac12}+a^{-\frac12}x^2a^\frac12\ge a^\frac12xa^{-1}xa^\frac12+a^{-\frac12}xaxa^{-\frac12}.
        \]
        By multiplying $a^\frac12$ from the left and right-hand sides of each term of both sides of the inequality, we infer that
        \[
        ax^2+x^2a\ge axa^{-1}xa+xax.
        \]
        Then 
        \[
        ax^2-xax\ge axa^{-1}xa-x^2a=(ax-xa)a^{-1}xa.
        \]
        Thus we obtain
        \begin{equation}\label{plus}
        (ax-xa)a^{-1}(ax-xa) \ge 0.
        \end{equation}
        On the other hand, as $(ax-xa)^*=-(ax-xa)$, we obtain 
        \begin{equation}\label{minus}
        0\le ((ax-xa)a^{-\frac12})((ax-xa)a^{-\frac12})^*=-(ax-xa)a^{-1}(ax-xa).
        \end{equation}
        Combining \eqref{plus} and \eqref{minus}, we infer that 
        \[
        \|(ax-xa)a^{-\frac12}\|^2=\|(ax-xa)a^{-1}(ax-xa)\|=0.
        \]
        It follows that $ax=xa$. As $x$ is an arbitrary self-adjoint element, we conclude that $a$ is a central element.
\end{proof}
\begin{proposition}\label{cent}
    Let $a\in A_+^{-1}$. The following are equivalent:
    \begin{itemize}
    \item[(i)]
     $a$ is a central element in $A$,
\item[(ii)]
    $\|axa^{-1}\|=\|x\|$ for every element $x\in A_+^{-1}$,
\item[(iii)]
 $\|a^2x\|=\|axa\|$ for every element $x\in A_+^{-1}$,
\item[(iv)]
     $\|ax\|=\|ax\|_S$ for every element $x\in A_+^{-1}$,
     \item[(v)] 
     $\|a^2x\|=\|a^2x\|_S$ for every element $x\in A_+^{-1}$.
     \end{itemize}
\end{proposition}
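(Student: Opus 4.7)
The forward implications (i) $\Rightarrow$ (ii)--(v) are immediate: if $a$ is central then $axa^{-1}=x$ and $a^2 x = axa$, and both $ax$ and $a^2 x$ are positive elements, so their norm and spectral seminorm coincide. The universally valid identity
\[
\|axa\| = \sup\sigma(axa) = \sup\sigma(a^2 x) = \|a^2 x\|_S, \quad x\in A_+^{-1},
\]
(which uses $axa \ge 0$, cyclicity of the spectrum, and $\sigma(a^2 x)\subset(0,\infty)$) shows moreover that conditions (iii) and (v) are literally the same statement. The real work is therefore to verify that each of (ii), (iv), (v) implies (i).

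The heart of the matter is the implication (iv) $\Rightarrow$ (i), which I plan to deduce from Proposition \ref{lo}: it suffices to show $a\le x\Rightarrow a^2\le x^2$ for every $x\in A_+^{-1}$. Given such an $x$, Lemma \ref{2} reduces the goal to the inequality $\|a^2 y\|_S\le \|x^2 y\|_S$ for every $y\in A_+^{-1}$. For the left-hand side I chain
\[
\|a^2 y\|_S = \|y^{\frac12}a^2 y^{\frac12}\| = \|ay^{\frac12}\|^2 = \|ay^{\frac12}\|_S^2 = \|y^{\frac14}ay^{\frac14}\|^2,
\]
where the third equality applies hypothesis (iv) to $y^{\frac12}\in A_+^{-1}$ and the last uses $\sigma(ay^{\frac12})=\sigma(y^{\frac14}ay^{\frac14})\subset[0,\infty)$. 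The same manipulation, without invoking (iv), produces only
\[
\|x^2 y\|_S = \|xy^{\frac12}\|^2 \ge \|xy^{\frac12}\|_S^2 = \|y^{\frac14}xy^{\frac14}\|^2.
\]
From $a\le x$ I obtain $y^{\frac14}ay^{\frac14}\le y^{\frac14}xy^{\frac14}$, whence $\|y^{\frac14}ay^{\frac14}\|\le \|y^{\frac14}xy^{\frac14}\|$, and the two chains combine to give $\|a^2 y\|_S \le \|x^2 y\|_S$, as required.

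The remaining converses reduce to this core step. For (v)$\Rightarrow$(i), hypothesis (v) is precisely hypothesis (iv) applied with $a$ replaced by $a^2\in A_+^{-1}$, so $a^2$ is central; since the center $Z(A)$ is a $C^*$-subalgebra and is therefore closed under continuous functional calculus, $a=(a^2)^{\frac12}$ is also central. Combined with the identification (iii)$=$(v), this also settles (iii). For (ii)$\Rightarrow$(i), given $y\in A_+^{-1}$ I substitute $x:=a^{-1}ya^{-1}\in A_+^{-1}$ into (ii); the left-hand side telescopes to $\|a\cdot a^{-1}ya^{-1}\cdot a^{-1}\|=\|a^{-2}y\|$, while the right-hand side equals $\|a^{-1}ya^{-1}\|=\sup\sigma(a^{-2}y)=\|a^{-2}y\|_S$. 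Thus hypothesis (iv) holds for $a^{-2}\in A_+^{-1}$, so $a^{-2}$ is central and hence so is $a$. The main technical obstacle is the asymmetric estimate in the (iv)$\Rightarrow$(i) step: hypothesis (iv) is available only for $a$, so the computation of $\|a^2 y\|_S$ collapses to equality while that of $\|x^2 y\|_S$ yields only the needed one-sided inequality, which is precisely what Lemma \ref{2} can exploit.
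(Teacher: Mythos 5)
Your proof is correct, and the overall skeleton matches the paper's: everything is funneled into the equivalence of (i) with (iv), with Proposition \ref{lo} as the centrality criterion and the identity $\|axa\|=\|a^2x\|_S$ identifying (iii) with (v). The core implication, however, is argued in the opposite direction and by different means. The paper proves the contrapositive: if $a$ is not central, Proposition \ref{lo} supplies an $x_0$ with $a\le x_0^{-1}$ but $a^2\not\le x_0^{-2}$, and the two infima $t_0=\inf\{t\colon a^2\le tx_0^{-2}\}=\|ax_0\|^2$ and $s_0=\inf\{s\colon a\le sx_0^{-1}\}=\|ax_0\|_S$ then exhibit an explicit witness $x_0$ with $\|ax_0\|>\|ax_0\|_S$. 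You instead argue directly: assuming (iv), you verify the local monotonicity hypothesis $a\le x\Rightarrow a^2\le x^2$ of Proposition \ref{lo} by converting the target operator inequality, via Lemma \ref{2}, into the family of seminorm inequalities $\|a^2y\|_S\le\|x^2y\|_S$ and checking each with the asymmetric chain you describe; the paper's proof of this proposition does not use Lemma \ref{2} at all. Your treatment of the remaining conditions is also tidier in one respect: rather than rerunning the witness construction for (v) ("in a similar way as above"), you observe that (v) and (ii) are literally condition (iv) for the elements $a^2$ and $a^{-2}$ respectively, and centrality passes through functional calculus. What the paper's route buys is a concrete element at which (iv) fails, which makes the failure quantitative; what your route buys is a shorter, more uniform reduction that reuses Lemma \ref{2} already proved in Section 3. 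Both are complete.
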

\begin{proof}
    First, we prove that (i) and (iv) are equivalent. 
    Suppose that (i) holds; $a$ is a central element. Then, so is $a^\frac12$ since $a^\frac12$ is approximated by polynomials of $a$. Hence $ax=a^\frac12xa^\frac12$, so $\|ax\|=\|a^\frac12xa^\frac12\|$. As $a^\frac12xa^\frac12$ is positive, so self-adjoint, we have $\|a^\frac12xa^\frac12\|=\|a^\frac12xa^\frac12\|_S$. As $\sigma(a^\frac12xa^\frac12)=\sigma(ax)$, we have $\|a^\frac12xa^\frac12\|_S=\|ax\|_S$. Thus, (iv) holds. 

    Suppose that $a$ is not a central element. We prove (iv) does not hold. As the above, we see that  $a^2$ is not a central element. Applying Proposition \ref{lo}, there exists $x_0\in A_+^{-1}$ such that $a\le x_0^{-1}$ and $a^2\not\le x_0^{-2}$. Put 
    \[
    t_0=\inf\{t\colon a^2\le tx_0^{-2}\},
    \]
    and
    \[
    s_0=\inf\{s \colon a\le sx_0^{-1}\}.
    \]
    Then $t_0>1$ as $a^2\not\le x_0^{-2}$. Since $a\le x_0^{-1}$ we have $s_0\le 1$. We compute
    \[
    t_0=\inf\{t\colon x_0a^2x_0\le te\}
    =
    \sup\{t\colon t\in \sigma (x_0a^2x_0)\}
    =\|x_0a^2x_0\|,
    \]
    where the last equality holds since $x_0a^2x_0$ is positive, hence self-adjoint. Since $x_0a^2x_0=(ax_0)^*(ax_0)$ we have
    \[
    \|x_0a^2x_0\|=\|ax_0\|^2.
    \]
    On the other hand, we compute
    \begin{multline*}
        s_0=\inf\{s\colon a\le sx_0^{-1}\}
        =
        \inf\{s\colon x_0^\frac12ax_0^\frac12\le se\}
        \\
        =
        \sup\{s\colon s\in \sigma (x_0^\frac12ax_0^\frac12)\}
        =
        \sup\{s\colon s\in \sigma(ax_0)\}
        =\|ax_0\|_S.
    \end{multline*}
    Since $t_0 >1\ge s_0^2$, we can conclude that $\|ax_0\|>\|ax_0\|_S$, so (iv) does not hold. 

    In a similar way as above, we see that (i) and (v) are equivalent. 

    By a calculation, we have
    \[
    \|axa\|=\sup\{t\colon t\in \sigma(axa)\}=\sup\{t\colon t\in \sigma(a^2x)\}=\|a^2x\|_S.
    \]
    Hence (iii) and (v) are equivalent. 

        Suppose that (ii) holds. By substituting $axa$ in place of $x$ in equation (ii), we have $\|a^2x\|=\|axa\|$. Thus (iii) holds.

        Suppose that (iii) holds. Substituting $a^{-1}xa^{-1}$ in place of $x$ in equation (iii), we have
        $\|axa^{-1}\|=\|x\|$. Thus (ii) holds.
\end{proof}
\section{Maps between positive semidefinite cones}
In this section, we exhibit results on maps between positive semidefinite cones of unital $C^*$-algebras. 
 Theorem \ref{thecor}, "the corollary"  of Moln\'ar, can be slightly generalized for the case of positive semidefinite cones of unital $C^*$-algebras as Theorem \ref{exthe}. It is in the same vein as "the corollary".
Notably,  
the Thompson metric is not well defined on the positive semidefinite cones. However, a representation theorem is possible for positively homogeneous order isomorphisms between positive semidefinite cones. As an application of Theorem \ref{exthe}, we provide a solution (Theorem \ref{semi13}) to the problem posed by  Moln\'ar \cite[p.194]{moljot}. 
\begin{theorem}\label{exthe}
    Suppose that $\phi\colon \semione\to \semitwo$ is a positively homogeneous order isomorphism, that is, 
    \begin{itemize}
        \item[(1)] $\phi$ is a bijection,
        \item[(2)] $\phi(tx)=t\phi(x)$, $\quad x\in \semione,\,\,t\ge 0$,
        \item[(3)] $x\le x'$ if and only if $\phi(x)\le \phi(x')$.
    \end{itemize}
    Then there exists a Jordan $*$-isomorphism $J\colon A_1\to A_2$ such that $\phi=\phi(e)^\frac12J\phi(e)^\frac12$ on $\semione$.
\end{theorem}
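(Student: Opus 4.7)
The plan is to reduce to Theorem \ref{thecor} by first showing that $\phi$ restricts to a positively homogeneous order isomorphism between the positive definite cones $\semione^{-1}$ and $\semitwo^{-1}$, then extending the resulting formula from $\semione^{-1}$ to all of $\semione$ using a norm-approximation argument.

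For the restriction, the crucial observation is that invertibility inside the positive cone admits a purely order-theoretic, positively homogeneous characterization: for $x\in A_+$, the element $x$ is invertible in $A$ if and only if for every $y\in A_+$ there exists $\mu>0$ with $y\le \mu x$. (Indeed, if $x\ge \lambda e$ for some $\lambda>0$ then $y\le \|y\|e\le (\|y\|/\lambda)x$; conversely, taking $y=e$ gives $e\le \mu x$, which forces $\sigma(x)\subset[1/\mu,\infty)$.) Because $\phi$ is a positively homogeneous order bijection, this property is preserved by $\phi$, so $\phi(\semione^{-1})=\semitwo^{-1}$ and the restriction $\phi\restriction_{\semione^{-1}}$ is itself a positively homogeneous order isomorphism of $\semione^{-1}$ onto $\semitwo^{-1}$. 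In particular $\phi(e)\in\semitwo^{-1}$, which is what allows the desired formula to make sense.

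Applying Theorem \ref{thecor} to this restriction, I obtain a Jordan $*$-isomorphism $J\colon A_1\to A_2$ with
\[
\phi(x)=\phi(e)^{\frac12}J(x)\phi(e)^{\frac12},\qquad x\in\semione^{-1}.
\]
To extend this to $\semione$, fix $x\in\semione$ and consider $x+\varepsilon e\in\semione^{-1}$ for $\varepsilon>0$. Set $z=\phi(e)^{\frac12}J(x)\phi(e)^{\frac12}\in\semitwo$. The formula already gives
\[
\phi(x+\varepsilon e)=\phi(e)^{\frac12}\bigl(J(x)+\varepsilon e\bigr)\phi(e)^{\frac12},
\]
so $\phi(x+\varepsilon e)\to z$ in norm as $\varepsilon\to 0^+$. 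Because $\phi$ is order-preserving, $\phi(x)\le \phi(x+\varepsilon e)$ for all $\varepsilon>0$, and since $\semitwo$ is norm-closed, passing to the norm limit yields $\phi(x)\le z$. For the reverse inequality, observe $z\le \phi(x+\varepsilon e)$ for every $\varepsilon>0$ (again by norm-closedness of $\semitwo$ applied to the decreasing sequence), so applying the order-reversing inverse $\phi^{-1}$ gives $\phi^{-1}(z)\le x+\varepsilon e$ for all $\varepsilon>0$, whence $\phi^{-1}(z)\le x$ and thus $z\le \phi(x)$. Therefore $\phi(x)=z=\phi(e)^{\frac12}J(x)\phi(e)^{\frac12}$ for all $x\in\semione$.

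The main obstacle is step one: extracting from the abstract hypotheses (bijectivity, positive homogeneity, bidirectional order preservation) enough information to conclude that $\phi$ sends invertible positive elements to invertible positive elements, and in particular that $\phi(e)$ is itself invertible. The order-theoretic characterization of invertibility above is what makes this work; without it one cannot legitimately invoke Theorem \ref{thecor} on the definite cone. The extension step is then a comparatively routine sandwich argument exploiting that $J$ is linear (so the approximation $x+\varepsilon e$ is transported cleanly) and that $A_{2+}$ is norm-closed.
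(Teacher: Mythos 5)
Your proposal is correct and follows essentially the same route as the paper: both first show $\phi$ carries the positive definite cone onto the positive definite cone via the order-theoretic domination property of invertible elements (the paper tests only against $b=\phi^{-1}(e)$, you state the full characterization, but the mechanism is identical), then invoke Theorem \ref{thecor} and extend to $\semione$ by sandwiching $\phi(x)$ between limits of $\phi(x+\varepsilon e)$ using bidirectional order preservation. The only blemish is the slip calling $\phi^{-1}$ ``order-reversing'' when you mean order-preserving; the argument itself is sound.
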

\begin{proof}
    We first prove that $\phi(\aone)=\atwo$. Suppose that $a\in \aone$. Let $b=\phi^{-1}(e)$, where $e$ is the unit in $A_2$. As $a$ is invertible, $a^{-\frac12}ba^{-\frac12}\ge 0$,
    so there exists a positive real number $t_b>0$ such that $b\le t_ba$. As $\phi$ preserves the order and is positively homogeneous, we infer that 
    \[
    e=\phi(b)\le \phi(t_ba)=t_b\phi(a). 
    \]
    Hence $\phi(a)$ is invertible, so $\phi(a)\in \atwo$. Conversely, we infer in a similar way that  
    if $c\in \atwo$, then $\phi^{-1}(c)\in \aone$.
    Thus we have $\phi(\aone)=\atwo$. 

    According to  "the corollary", there exists a Jordan $*$-isomorphism $J\colon A_1\to A_2$ such that $\phi=\phi(e)^\frac12J\phi(e)^\frac12$ on $\aone$. We prove that the equality also holds on $\semione$. Let $a\in \semione$ be arbitrary. For any $t>0$, $a+te\in \aone$. Then
    \[
    \phi(a)\le \phi(a+te)=\phi(e)^\frac12J(a+te)\phi(e)^\frac12=\phi(e)^\frac12J(a)\phi(e)^\frac12+ t\phi(e)
    \]
    as $\phi$ preserves the order. Letting $t\to 0$ we have
    \[
    \phi(a)\le \phi(e)^\frac12J(a)\phi(e)^\frac12.
    \]
    As $\phi$ is a surjection, there exists $a'\in \semione$ such that $\phi(a')=\phi(e)^\frac12J(a)\phi(e)^\frac12$. 
    Hence we have $\phi(a)\le \phi(a')$, so $a\le a'$ as $\phi$ is an order isomorphism. Next, for any $t>0$ we have
    \[
    \phi(a')=\phi(e)^\frac12J(a)\phi(e)^\frac12\le 
    \phi(e)^\frac12J(a+te)\phi(e)^\frac12=\phi(a+te),
    \]
    as $J$ preserves the order and $a+te\in \aone$. 
    Thus, we have $a'\le a+te$ for every $t>0$. Letting $t\to 0$, we have $a'\le a$. It follows that $a=a'$ and $\phi(a)=\phi(a')=\phi(e)^\frac12J(a)\phi(e)^\frac12$. 
    As $a\in \semione$ is arbitrary, we conclude that $\phi=\phi(e)^\frac12J\phi(e)^\frac12$ on $\semione$.
\end{proof}
The proposition below is about a characterization of the inequality $a\le b$ for $a,b$ in a positive semidefinite cone of a unital $C^*$-algebra. 
It is a generalization of a part of 
 \cite[Lemma 13]{cmm} which applies to positive definite cones, as well as a part of \cite[Lemma 2.8]{moljot}, which applies to positive semidefinite cones of von Neumann algebras. To prove it, we apply \cite[Lemma 13]{cmm}.
\begin{proposition}\label{ineq}
    Let $a,b\in \semino$. Suppose that $\|xax\|\le\|xbx\|$ holds for every $x\in \aplumi$. Then $a\le b$. Suppose conversely that $a\le b$. Then $\|xax\|\le \|xbx\|$ holds for all $x\in \semino$.
    In particular, $a\le b$ if and only if $\|xax\|\le \|xbx\|$ for every $x\in \semino$.
\end{proposition}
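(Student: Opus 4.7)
The plan is to treat the two implications of the ``iff'' separately, and derive the ``in particular'' as their immediate combination using $\aplumi \subseteq \semino$. The converse direction is essentially formal: assuming $a \le b$, then $b - a \ge 0$, and conjugation by a positive element preserves the order, so for any $x \in \semino$ we have $x(b-a)x = xbx - xax \ge 0$, i.e.\ $xax \le xbx$. Since the norm is monotone on the positive cone, $\|xax\| \le \|xbx\|$ follows at once.

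For the main direction, given $a,b \in \semino$ with $\|xax\| \le \|xbx\|$ for every $x \in \aplumi$, the strategy is to insert a single carefully chosen test element that normalizes $b$. Fix $\epsilon > 0$ and set $x_\epsilon = (b + \epsilon e)^{-1/2}$, which lies in $\aplumi$ because $b + \epsilon e \in \aplumi$. The point of the choice is that $x_\epsilon (b + \epsilon e) x_\epsilon = e$; combining this with $b \le b + \epsilon e$ (and order-preservation under conjugation by $x_\epsilon$) gives $x_\epsilon b x_\epsilon \le e$, so $\|x_\epsilon b x_\epsilon\| \le 1$. The hypothesis then forces $\|x_\epsilon a x_\epsilon\| \le 1$, and because $x_\epsilon a x_\epsilon$ is positive, this implies $x_\epsilon a x_\epsilon \le e$. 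Conjugating this inequality by $(b + \epsilon e)^{1/2}$ (again preserving the order, since the element is positive) yields $a \le b + \epsilon e$. Letting $\epsilon \to 0$ and invoking norm-closedness of $\semino$ gives $a \le b$.

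The main obstacle is bridging the gap between the hypothesis (stated only for invertible $x$, with $a,b$ possibly non-invertible) and CMM's Lemma 13 (applicable to positive definite pairs). A naive approach replacing $a,b$ by $a + \epsilon e,\, b + \epsilon e \in \aplumi$ fails, because from $\|xax\| \le \|xbx\|$ one cannot in general deduce $\|xax + \epsilon x^2\| \le \|xbx + \epsilon x^2\|$ for every $x \in \aplumi$ (norm comparisons of positive elements are not preserved under addition of a common positive element). The substitution $x_\epsilon = (b + \epsilon e)^{-1/2}$ sidesteps this obstruction by extracting, directly from a single scalar inequality, the order inequality $a \le b + \epsilon e$, whose limit is the desired conclusion.
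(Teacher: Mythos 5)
Your proof is correct, but your forward direction is genuinely different from (and noticeably more elementary than) the paper's. The paper reduces to \cite[Lemma 13]{cmm}: starting from the hypothesis it estimates $\|x(a+te)x\|$ by the triangle inequality, controls the error term $\|x(te)x\|$ via the order inequality $te\le \sqrt{t}\,b+te$, and arrives at $\|x(a+te)x\|\le\|x(1+2\sqrt{t})(b+\sqrt{t}e)x\|$ for all $x\in\aplumi$; since $a+te$ and $(1+2\sqrt{t})(b+\sqrt{t}e)$ are both positive definite, \cite[Lemma 13]{cmm} then yields $a+te\le(1+2\sqrt{t})(b+\sqrt{t}e)$, and $t\to0$ finishes. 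Your argument bypasses \cite[Lemma 13]{cmm} entirely in this direction: testing the norm hypothesis against the single element $x_\epsilon=(b+\epsilon e)^{-1/2}\in\aplumi$, you get $x_\epsilon b x_\epsilon\le e$, hence $\|x_\epsilon a x_\epsilon\|\le\|x_\epsilon b x_\epsilon\|\le 1$, and since $x_\epsilon a x_\epsilon\ge 0$ the norm bound upgrades to the order bound $x_\epsilon a x_\epsilon\le e$; conjugating back gives $a\le b+\epsilon e$ and $\epsilon\to0$ concludes. Every step checks out (for a positive element $c$, $\|c\|\le1$ is indeed equivalent to $c\le e$, and the positive cone is closed), and your proof in fact shows that the hypothesis is only needed on the one-parameter family $\{(b+\epsilon e)^{-1/2}\}_{\epsilon>0}$, which is a sharper statement. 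What the paper's route buys instead is uniformity with the rest of Section 5, where \cite[Lemma 13]{cmm} is the recurring workhorse; what yours buys is brevity and independence from that lemma in the harder direction. The converse direction and the concluding "in particular" are handled the same way in both.
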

\begin{proof}
    Suppose that $\|xax\|\le\|xbx\|$ for every $x\in \aplumi$. Let $x\in \aplumi$ and $0<t<1$ be arbitrary. We have
\begin{multline}\label{tbt}
    \|x(a+te)x\|\le\|xax\|+\|x(te)x\|\le \|xbx\|+\|x(te)x\|
    \\
    \le \|x(b+te)x\|+2\|x(te)x\|.
\end{multline}
As $0<te\le \sqrt{t}b+te$, we have by Lemma 13 in \cite{cmm} that  
\[
\|x(te)x\|\le \|x(\sqrt{t}b+te)x\|=\sqrt{t}\|x(b+\sqrt{t}e)x\|.
\]
As $0<t<1$ we have $0<b+te\le b+\sqrt{t}e$, so by \cite[Lemma 13]{cmm}, 
\[
\|x(b+te)x\|\le \|x(b+\sqrt{t}e)x\|.
\]
We obtain  that
\[
\|x(b+te)x\|+2\|x(te)x\|\le (1+2\sqrt{t})\|x(b+\sqrt{t}e)x\|.
\]
Then by \eqref{tbt} we have
\[
\|x(a+te)x\|\le (1+2\sqrt{t})\|x(b+\sqrt{t}e)x\|=\|x(1+2\sqrt{t})(b+\sqrt{t}e)x\|.
\]
As $x\in \aplumi$ is arbitrary, by \cite[Lemma 13]{cmm} we have 
\[
a+te\le (1+2\sqrt{t})(b+\sqrt{t}e).
\]
Letting $t\to 0$, we obtain $a\le b$.

Conversely, suppose that $a\le b$. Then for every $x\in \semino$, $0\le xax\le xbx$. Hence 
\[
\|xax\|_S=\sup\{t\colon t\in \sigma(xax)\}\le \sup\{t\colon t\in \sigma(xbx)\}=\|xbx\|_S.
\]
As $xax$ and $xbx$ are self-adjoint, the norm and the spectral seminorm coincide with each other, hence we obtain 
$\|xax\|\le \|xbx\|$ for all $x\in \semino$. 
    
\end{proof}
\begin{corollary}\label{nhn}
    Let $a,b\in \semino$. Then, $a=b$ if and only if 
    $\|xax\|=\|xbx\|$ for every $x\in \semino$ with $x\le e$.
\end{corollary}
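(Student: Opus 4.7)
The forward implication is immediate: if $a=b$, then certainly $\|xax\|=\|xbx\|$ for every $x\in\semino$, in particular for those with $x\le e$. My plan for the converse is to reduce to Proposition \ref{ineq}, which characterizes the order $a\le b$ via $\|xax\|\le\|xbx\|$ for all $x\in\aplumi$. The hypothesis in the corollary only furnishes equality on the subset of $\semino$ consisting of contractions $x\le e$, so the main task is to lift this restricted equality to all of $\aplumi$ by a simple rescaling.

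Concretely, I would take an arbitrary $x\in \aplumi$ and set $c=\|x\|>0$. Since $x$ is positive, $\sigma(x)\subset [0,c]$ and hence $x\le ce$, so the rescaled element $y=x/c$ lies in $\semino$ with $y\le e$. The hypothesis applied to $y$ gives $\|yay\|=\|yby\|$, and multiplying by $c^2$ recovers
\[
\|xax\|=\|xbx\|, \quad x\in \aplumi.
\]
Feeding this into Proposition \ref{ineq} as $\|xax\|\le \|xbx\|$ yields $a\le b$, and symmetrically the reversed inequality gives $b\le a$, so $a=b$.

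I do not anticipate a genuine obstacle; the key observation is simply that normalization by $\|x\|$ places $y$ exactly in the class on which the hypothesis is available, while the quadratic scaling identity $\|xax\|=c^2\|yay\|$ transfers the equality back to the unscaled $x$. Proposition \ref{ineq}, already proved above, does all the heavy work.
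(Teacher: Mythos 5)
Your proof is correct and follows essentially the same route as the paper: rescale an arbitrary positive element by its norm to land in the class $\{x\le e\}$, use the quadratic homogeneity of $x\mapsto\|xax\|$ to transfer the equality back, and then invoke Proposition \ref{ineq} in both directions. The only cosmetic difference is that the paper recovers the equality on all of $\semino$ while you restrict to $\aplumi$, which is exactly what Proposition \ref{ineq} requires, so nothing is lost.
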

\begin{proof}
Suppose that $\|xax\|=\|xbx\|$ for every $x\in \semino$ with $x\le e$. Then for every positive real number $t$, we have $\|(tx)a(tx)\|=\|(tx)b(tx)\|$. Thus we have $\|yay\|=\|yby\|$ for every $y\in \semino$. By Proposition \ref{ineq} we infer that $a=b$. 

The converse statement is trivial. 
\end{proof}
The following proposition is a version of  Lemma \ref{34}.
\begin{proposition}\label{ppp}
Suppose that $\phi$ and $\psi$ are surjections from $\semione$ onto $\semitwo$ such that
\begin{equation}\label{biyon}
\|yxy\|=\|\phi(y)\psi(x)\phi(y)\|,\quad x,y\in \semione.
\end{equation}
and $\psi(e)=e$. Then, there exists a Jordan $*$-isomorphism $J\colon A_1\to A_2$ such that $\psi=J$ on $\semione$.
\end{proposition}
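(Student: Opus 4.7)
The plan is to mimic the proof of Lemma \ref{34}, replacing \cite[Lemma 13]{cmm} by Proposition \ref{ineq}, Lemma \ref{1} by Corollary \ref{nhn}, and Theorem \ref{thecor} by Theorem \ref{exthe}. Concretely, I will verify that $\psi$ is a positively homogeneous order isomorphism from $\semione$ onto $\semitwo$, and then invoke Theorem \ref{exthe} together with $\psi(e)=e$ to conclude that $\psi$ coincides with a Jordan $*$-isomorphism $J$.

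First, I would show that $\psi$ is order-preserving in both directions. By Proposition \ref{ineq}, for $x,x' \in \semione$ we have $x \le x'$ if and only if $\|yxy\| \le \|yx'y\|$ for every $y \in \semione$. Substituting the hypothesis $\|yxy\| = \|\phi(y)\psi(x)\phi(y)\|$ and using the surjectivity of $\phi$ onto $\semitwo$, this is equivalent to $\|z\psi(x)z\| \le \|z\psi(x')z\|$ for every $z \in \semitwo$, which, again by Proposition \ref{ineq}, is equivalent to $\psi(x) \le \psi(x')$. A surjection that preserves the order in both directions is injective (by the argument recalled just before Theorem \ref{thecor}), so $\psi$ is in fact a bijection.

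Next, I would establish positive homogeneity. For $t \ge 0$ and $x \in \semione$, the chain
\[
\|\phi(y)\psi(tx)\phi(y)\| = \|y(tx)y\| = t\|yxy\| = t\|\phi(y)\psi(x)\phi(y)\| = \|\phi(y)(t\psi(x))\phi(y)\|
\]
holds for every $y \in \semione$. Since $\phi$ is surjective, this yields $\|z\psi(tx)z\| = \|z(t\psi(x))z\|$ for every $z \in \semitwo$, and Corollary \ref{nhn} then forces $\psi(tx) = t\psi(x)$.

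With these two properties in hand, $\psi \colon \semione \to \semitwo$ is a positively homogeneous order isomorphism. Theorem \ref{exthe} therefore furnishes a Jordan $*$-isomorphism $J \colon A_1 \to A_2$ such that $\psi = \psi(e)^\frac12 J \psi(e)^\frac12$ on $\semione$, which collapses to $\psi = J$ since $\psi(e) = e$. There is no serious obstacle here; the only delicate point is to use the surjectivity of $\phi$ to exhaust all of $\semitwo$ as test elements, so that Proposition \ref{ineq} and Corollary \ref{nhn} may be applied directly without passing to a restricted subcone.
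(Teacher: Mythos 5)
Your proposal is correct and follows essentially the same route as the paper: establish that $\psi$ is an order isomorphism via Proposition \ref{ineq}, establish positive homogeneity via Corollary \ref{nhn}, and then apply Theorem \ref{exthe} together with $\psi(e)=e$. The substitutions you identify (Proposition \ref{ineq} for \cite[Lemma 13]{cmm}, Corollary \ref{nhn} for Lemma \ref{1}, Theorem \ref{exthe} for Theorem \ref{thecor}) are exactly the ones the paper makes.
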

\begin{proof}
We prove that $\psi$ is an order isomorphism from $\semione$ onto $\semitwo$. Let $x,x'\in \semione$. As 
$\psi$ is a surjection, it is enough to prove that $x\le x'$ if and only if $\psi(x)\le \psi(x')$ for every $x,x'\in \semione$. By \eqref{biyon} and Proposition \ref{ineq}, we have $x\le x'$ if and only if $\|yxy\|\le \|yx'y\|$ for every $y\in \semione$ if and only if 
$\|\phi(y)\psi(x)\phi(y)\|\le \|\phi(y)\psi(x')\phi(y)\|$ for every $y\in \semione$. As $\phi$ is a surjection, it is equivalent to $\psi(x)\le \psi(x')$ by Proposition \ref{ineq}. We conclude that $x\le x'$ if and only if $\psi(x)\le \psi(x')$ for every $x,x' \in \semione$. We infer that $\psi$ is an injection, so it is a bijection. As $x$ and $x'$ are arbitrary, we conclude that $\psi$ is an order isomorphism. We prove that $\psi$ is positively homogeneous. Let $x\in \semione$ and $t\ge 0$ arbitrary. Then we have by \eqref{biyon} that
\begin{multline*}
\|\phi(y)\psi(tx)\phi(y)\|=\|y(tx)y\|
\\
=t\|yxy\|=t\|\phi(y)\psi(x)\phi(y)\|=\|\phi(y)(t\psi(x))\phi(y)\|, \quad y\in \semione.
\end{multline*}
As $\phi$ is a surjection, we have $\psi(tx)=t\psi(x)$ by Corollary \ref{nhn}. Thus, $\psi$ is positively homogeneous. 
Due to Theorem \ref{exthe} and the hypothesis that $\psi(e)=e$, we conclude that there exists a Jordan $*$-isomorphism $J\colon A_1\to A_2$ such that $\psi=J$.
\end{proof}
The following theorem provides a solution to the problem posed by Moln\'ar \cite[p.194]{moljot}.
\begin{theorem}\label{semi13}
    Suppose that $\phi\colon \semione\to \semitwo$ is a surjection. The following are equivalent:
    \begin{itemize}
        \item[(i)] There exists a Jordan $*$-isomorphism $J\colon A_1\to A_2$ such that $\phi=J$ on $\semione$;
        \item[(ii)] $\|\phi(x)\phi(y)\|=\|xy\|$, $\quad x,y\in \semione$;
        \item[(iii)] $\|\phi(x)\phi(y)\|_S=\|xy\|_S$, $\quad x,y\in \semione$,
        \item[(iv)] $\sigma(\phi(x)\phi(y))=\sigma(xy)$, $\quad x,y\in \semione$,
        \item[(v)] For a positive real number $p$, 
        \[
        \|(\phi(x)^\frac{p}{2}\phi(y)^p\phi(x)^{\frac{p}{2}})^\frac1p\|=\|(x^{\frac{p}{2}}y^px^{\frac{p}{2}})^\frac1p\|, \quad x,y\in \semione.
        \]
    \end{itemize}
\end{theorem}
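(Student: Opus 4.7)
The strategy is to mirror the proof of Theorem~\ref{3.6} closely, replacing each appeal to Lemma~\ref{1}, Corollary~\ref{3}, or Lemma~\ref{34} by its semidefinite counterpart (Corollary~\ref{nhn} or Proposition~\ref{ppp}). The implications (i)$\Rightarrow$(ii) and (i)$\Rightarrow$(iv) go through verbatim: since a Jordan $*$-isomorphism $J$ preserves squaring, the triple product, the involution, the norm, and the spectrum, one computes $\|\phi(x)\phi(y)\|^{2} = \|J(y)J(x^{2})J(y)\| = \|yx^{2}y\| = \|xy\|^{2}$, and $\sigma(\phi(x)\phi(y)) = \sigma(J(y^{1/2}xy^{1/2})) = \sigma(y^{1/2}xy^{1/2}) = \sigma(xy)$. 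The implication (iv)$\Rightarrow$(iii) is immediate from the definition of $\|\cdot\|_{S}$.

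For (ii)$\Rightarrow$(i), I first set $x = y$ to get $\|\phi(y)\| = \|y\|$, then exploit the identity $\|a\phi(y)\|^{2} = \|\phi(y)a^{2}\phi(y)\|$ with $a = \phi(e)$ and $a = e$ to obtain $\|\phi(y)\phi(e)^{2}\phi(y)\| = \|\phi(y)\, e\, \phi(y)\|$ for every $y \in \semione$; surjectivity of $\phi$ plus Corollary~\ref{nhn} force $\phi(e)^{2} = e$, whence $\phi(e) = e$. Setting $\psi(x) = \phi(x^{1/2})^{2}$ gives a surjection $\semione \to \semitwo$ with $\psi(e) = e$ and $\|yxy\| = \|x^{1/2}y\|^{2} = \|\phi(x^{1/2})\phi(y)\|^{2} = \|\phi(y)\psi(x)\phi(y)\|$, so Proposition~\ref{ppp} yields a Jordan $*$-isomorphism $J$ with $\psi = J$; uniqueness of positive square roots then gives $\phi = J$ on $\semione$. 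The implication (iii)$\Rightarrow$(i) is parallel: derive $\phi(e) = e$ using the identity $\|xy\|_{S} = \|y^{1/2}xy^{1/2}\|$ (valid for positive $x,y$) together with Corollary~\ref{nhn}, then define $\varphi(y) = \phi(y^{2})^{1/2}$, verify $\|yxy\| = \|x^{1/2}y^{2}x^{1/2}\| = \|xy^{2}\|_{S} = \|\phi(x)\phi(y^{2})\|_{S} = \|\varphi(y)\phi(x)\varphi(y)\|$, and apply Proposition~\ref{ppp}.

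Finally, (v)$\Leftrightarrow$(iii) is obtained as in Theorem~\ref{3.6}: for $p = 1$ the two statements coincide through $\|\phi(x)^{1/2}\phi(y)\phi(x)^{1/2}\| = \|\phi(x)\phi(y)\|_{S}$, while the general case reduces to $p = 1$ via the substitution $\widetilde{\phi}(x) = \phi(x^{1/p})^{p}$ together with $\|a^{1/p}\| = \|a\|^{1/p}$ for $a \in \semitwo$, which comes from the spectral mapping theorem. The main technical point is making the passage from the definite toolkit to the semidefinite one work, and specifically the step that forces $\phi(e) = e$: in the definite case Lemma~\ref{1} compares $\|ax\|$ with $\|a'x\|$ directly, whereas in the semidefinite case the comparison must be routed through the quadratic form $\|xax\|$ and Corollary~\ref{nhn}. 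Once this normalization is in place, Proposition~\ref{ppp} plays the role of Lemma~\ref{34} and closes each remaining implication in a single stroke.
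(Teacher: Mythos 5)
Your proposal is correct and follows essentially the same route as the paper's proof: the normalization $\phi(e)=e$ via Corollary~\ref{nhn}, the auxiliary surjections $\psi(x)=\phi(x^{1/2})^{2}$ and $\varphi(y)=\phi(y^{2})^{1/2}$ fed into Proposition~\ref{ppp}, and the reduction of general $p$ to $p=1$ by $x\mapsto\phi(x^{1/p})^{p}$ all coincide with the paper's argument. The only cosmetic difference is that the paper routes (iii)$\Rightarrow$(i) through (v) with $p=1$ and justifies $\|xy\|_{S}=\|x^{1/2}yx^{1/2}\|$ on $\semino$ explicitly via Jacobson's lemma, which you assert without naming.
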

\begin{proof}
Suppose that (ii) holds. We prove (i). As $(ab)^*ab=ba^2b$ for every $a,b\in \semione$, we infer by the equation (ii) that 
\begin{equation}\label{ippon}
\|yxy\|=\|x^\frac12y\|^2=\|\phi(x^\frac12)\phi(y)\|^2=\|\phi(y)\phi(x^\frac12)^2\phi(y)\|, \quad x,y\in \semione
\end{equation}
Defining $\psi\colon \semione\to \semitwo$ by $\psi(x)=\phi(x^\frac12)^2$, $x\in \semione$, $\psi$ is a surjection, and  by \eqref{ippon} we obtain
\begin{equation}\label{nihon}
\|yxy\|=\|\phi(y)\psi(x)\phi(y)\|,\quad x,y\in \semione.
\end{equation}
We prove $\psi(e)=e$. Letting $x=y$ in the equation of (ii), we have $\|y^2\|=\|\phi(y)^2\|$. 
Thus we have
\begin{multline*}
\|\phi(y)\phi(e)^2\phi(y)\|=\|\phi(e)\phi(y)\|^2
\\
=\|ey\|^2=\|y^2\|=\|\phi(y)^2\|=\|\phi(y)e\phi(y)\|.
\end{multline*}
By Corollary \ref{nhn}, we have $\phi(e)^2=e$, so $\phi(e)=e$, hence $\psi(e)=e$. 
Applying Proposition \ref{ppp}, there exists a Jordan $*$-isomorphism $J\colon A_1\to A_2$ such that $\psi=J$ on $\semione$. For any $x\in \semione$, $\phi(x)=\psi(x^2)^\frac12=J(x^2)^\frac12=J(x)$. Thus $\phi=J$ on $\semione$.

We prove (v) implies (i). Suppose first we consider $p=1$. In this case, the equation of (iii) is 
\begin{equation}\label{12112}
\|\phi(x)^\frac12\phi(y)\phi(x)^\frac12\|=\|x^\frac12yx^\frac12\|, \quad x,y\in \semione.
\end{equation}
We prove $\phi(e)=e$. Inserting $x=y$ in \eqref{12112}, we obtain $\|\phi(x)^2\|=\|x^2\|$. Hence $\|\phi(x)\|=\|x\|$ for every $x\in \semione$. 
Then by \eqref{12112} we have
\begin{multline*}
    \|\phi(x)^\frac12\phi(e)\phi(x)^\frac12\|=\|x^\frac12ex^\frac12\|
    \\
    =\|x\|=\|\phi(x)\|=\|\phi(x)^\frac12e\phi(x)^\frac12\|,\quad x\in \semione.
\end{multline*}
As $\phi$ is a surjection, we see that $\{\phi(x)^\frac12\colon x\in \semione\}=\semitwo$. Then Corollary \ref{nhn} ensures that $\phi(e)=e$. 
Define $\varphi\colon \semione\to \semitwo$ by
$\varphi(x)=\phi(x^2)^\frac12$, $x\in \semione$. Then by \eqref{12112} we obtain
\[
\|\varphi(x)\phi(y)\varphi(x)\|=\|xyx\|,\quad x,y\in \semione.
\]
Then by Proposition \ref{ppp} we observe that (i) holds.

We consider a general positive $p$. 
Define $\phi'\colon \semione\to \semitwo$ by $\phi'(x)=\phi(x^\frac1p)^p$, $x\in \semione$. By a simple calculation, we infer that $\phi'$ is a surjection and 
\[
\|x^\frac12yx^\frac12\|=\|\phi'(x)^\frac12\phi'(y)\phi'(x)^\frac12\|, \quad x,y\in \semione.
\]
By the first part, a Jordan $*$-isomorphism $J\colon A_1\to A_2$ exists such that $\phi'=J$. Thus, we also see that $\phi=J$ on $\semione$.

It is apparent that (v) implies (iv). 

Suppose that (iii) holds. Let $x,y\in \semione$ arbitrary. By the so-called Jacobson's lemma, we have $\sigma(xy)\setminus\{0\}=\sigma(x^\frac12yx^\frac12)\setminus\{0\}$. Hence we 
have $\|xy\|_S=\|x^\frac12yx^\frac12\|_S$. As $x^\frac12yx^\frac12$ is self-adjoint, we have 
$\|x^\frac12yx^\frac12\|_S=\|x^\frac12yx^\frac12\|$. 
Therefore we have $\|xy\|_S=\|x^\frac12yx^\frac12\|$. In the same way we have 
$\|\phi(x)\phi(y)\|_S=\|\phi(x)^\frac12\phi(y)\phi(x)^\frac12\|$. Thus we have 
\[
\|\phi(x)^\frac12\phi(y)\phi(x)^\frac12\|=\|x^\frac12yx^\frac12\|,\quad x,y\in \semione.
\]
In other words, we have (v) for $p=1$ holds for $\phi$. In this case, we have already proven that (i) holds.  

Suppose that (i); $\phi=J$ on $\semione$ for a Jordan $*$-isomorphism $J$. We prove (iv). Let $x,y\in \semione$ 
arbitrary. By the Jacobson lemma, $\sigma(xy)\setminus\{0\}=\sigma(x^\frac12yx^\frac12)\setminus\{0\}$. By a simple calculation we have $x^\frac12yx^\frac12$ is invertible if and only if $xy$ is invertible. (Suppose that $x^\frac12yx^\frac12$ is invertible; 
$(x^\frac12yx^\frac12)a=a(x^\frac12yx^\frac12)=e$ for some 
$a\in A_1$. It means that $x^\frac12$ is also invertible. 
Hence $yx=x^{-\frac12}(x^\frac12yx^\frac12)x^\frac12$ is 
invertible. Thus $xy=(yx)^*$ is invertible. Conversely, suppose that $xy$ is invertible. As $(xy)^*=yx$ is also invertible, we have $xyyx=xy^2x$ is invertible. Hence, there exists $b\in A_1$ such that $bxy^2x=xy^2xb=e$. Hence, $x$ is also invertible. Thus, the spectrum of $x$ consists of positive real numbers. By the spectral mapping theorem, so is the spectrum of $x^\frac12$. Thus, $x^\frac12$ is invertible. Hence $x^\frac12(yx)x^{-\frac12}=x^\frac12yx^\frac12$ is invertible.)
It follows that $\sigma(x^\frac12yx^\frac12)=\sigma(xy)$.  Similarly, we have $\sigma(J(x)^\frac12J(y)J(x)^\frac12)=\sigma(J(x)J(y))$. As a Jordan $*$-isomorphism preserves the spectrum, the triple product, and the square root, we see that
\[
\sigma(x^\frac12yx^\frac12)=\sigma(J(x^\frac12yx^\frac12))=\sigma(J(x)^\frac12J(y)J(x)^\frac12).
\]
We conclude that $\sigma(J(x)J(y))=\sigma(xy)$.

We prove that (i) implies (ii). Suppose that there exists a Jordan $*$-isomorphism $J$ such that $\phi=J$ on $\semione$. Since $J$ preserves squaring, the involution, the triple product, and the norm, we obtain for arbitrary $x,y\in \semione$ that
\begin{multline*}
\|\phi(x)\phi(y)\|^2=\|J(x)J(y)\|^2
=\|(J(x)J(y))^*J(x)J(y)\|
\\
=\|J(y)J(x^2)J(y)\|
=\|J(yx^2y)\|
=\|yx^2y\|=\|xy\|^2, 
\end{multline*}
so (ii) holds. 

As a Jordan $*$-isomorphism preserves the triple product and the power, it is easy to prove that (i) implies (v).
\end{proof}
The set $E(A)=\{a\in \semino\colon 0\le a\le e\}$ is known as the effect algebra. 
This structure holds significant importance in the quantum theory of measurements, as it establishes the foundation for unsharp measurements within quantum mechanics.
The following result generalizes Corollary 2.9 in \cite{moljot}, which was stated for von Neumann algebras. Let $p>0$. We define $x\diamond_p y=(x^{\frac{p}{2}}y^px^{\frac{p}{2}})^\frac{1}{p}$ for $x,y\in E(A)$.  The involved operation for $p=1$ is the usual sequential product. 
\begin{corollary}
    Let $\phi\colon \effone\to \efftwo$ be a surjection and $p>0$. It satisfies 
    \[
    \|\phi(x)\diamond_p \phi(y)\|=
    \|x\diamond_p y\|,
    \quad x,y\in \semione
    \]
    if and only if there exists a Jordan $*$-isomorphism $J\colon A_1\to A_2$ such that $\phi=J$ on $\effone$.
\end{corollary}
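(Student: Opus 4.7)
The backward implication is routine since a Jordan $*$-isomorphism $J$ satisfies $J(x)\diamond_p J(y)=J(x\diamond_p y)$ and preserves the norm. For the forward direction, the plan is to extend $\phi$ to a surjection $\widetilde\phi\colon\semione\to\semitwo$ that preserves the $\diamond_p$-norm on the whole positive semidefinite cone, and then invoke Theorem \ref{semi13} directly.

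The key lemma is a positive-homogeneity statement on the effect algebra: for every $y\in\effone$ and every $r\in(0,1]$, $\phi(ry)=r\phi(y)$. Starting from the trivial scalings $\|ry\diamond_p z\|=r\|y\diamond_p z\|$ and $\|(r\phi(y))\diamond_p\phi(z)\|=r\|\phi(y)\diamond_p\phi(z)\|$ (both coming from $(rw)^{p/2}=r^{p/2}w^{p/2}$), the hypothesis yields $\|\phi(ry)\diamond_p\phi(z)\|=\|(r\phi(y))\diamond_p\phi(z)\|$ for every $z\in\effone$. Raising to the $p$-th power and using the cyclic identity $\|a^{p/2}da^{p/2}\|=\|d^{1/2}a^{p}d^{1/2}\|$ (which follows from $\sigma(uv)\setminus\{0\}=\sigma(vu)\setminus\{0\}$ applied to $u=a^{p/2}d^{1/2}$ and $v=d^{1/2}a^{p/2}$), together with the observation that $d=\phi(z)^{p}$ ranges over all of $\efftwo$ as $z$ varies over $\effone$ (since $\phi$ is surjective onto $\efftwo$ and $c\mapsto c^{p}$ is a bijection of $\efftwo$), produces $\|d^{1/2}\phi(ry)^{p}d^{1/2}\|=\|d^{1/2}(r\phi(y))^{p}d^{1/2}\|$ for every $d\in\efftwo$. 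Corollary \ref{nhn} then forces $\phi(ry)^{p}=r^{p}\phi(y)^{p}$, and taking $p$-th roots finishes the lemma.

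With the lemma in hand, set $\widetilde\phi(0)=0$ and $\widetilde\phi(x)=t\phi(x/t)$ for any $t\ge\|x\|$ when $x\ne 0$. Well-definedness is immediate: if $s\le t$ are both at least $\|x\|$, then with $r=s/t\in(0,1]$ and $y=x/s\in\effone$ one has $x/t=ry$, so $t\phi(x/t)=tr\phi(y)=s\phi(x/s)$. Surjectivity of $\widetilde\phi$ onto $\semitwo$ follows from the surjectivity of $\phi$ by rescaling, and for any $x,y\in\semione$ and $t\ge\max(\|x\|,\|y\|)$ the scaling of $\diamond_p$ gives $\|\widetilde\phi(x)\diamond_p\widetilde\phi(y)\|=t^{2}\|\phi(x/t)\diamond_p\phi(y/t)\|=t^{2}\|(x/t)\diamond_p(y/t)\|=\|x\diamond_p y\|$. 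Theorem \ref{semi13} then produces a Jordan $*$-isomorphism $J\colon A_1\to A_2$ with $\widetilde\phi=J$ on $\semione$, and restricting to $\effone$ yields $\phi=J$. The main obstacle is the positive-homogeneity lemma, specifically the injectivity statement that $\|a\diamond_p c\|=\|b\diamond_p c\|$ for every $c\in\efftwo$ forces $a=b$; the cyclic spectrum trick above is what allows Corollary \ref{nhn} to dispatch this.
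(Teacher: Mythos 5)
Your proof is correct and follows essentially the same route as the paper: establish positive homogeneity of $\phi$ on $\effone$ via Corollary \ref{nhn}, extend by homogeneity to a surjection of $\semione$ preserving the $\diamond_p$-norm, and invoke Theorem \ref{semi13}. The only cosmetic difference is that you scale the outer argument of $\diamond_p$ and therefore need the Jacobson-lemma flip $\|a^{p/2}da^{p/2}\|=\|d^{1/2}a^pd^{1/2}\|$, whereas the paper scales the inner argument and applies Corollary \ref{nhn} directly.
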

\begin{proof}
We first prove that $\phi(tx)=t\phi(x)$ for every $x\in \effone$ and $0<t\le 1$. 
Suppose that $x,y\in \effone$ and $0<t\le 1$. Then we have $ty\in \effone$ and 
\begin{multline*}
\|(\phi(x)^\frac{p}{2}\phi(ty)^p\phi(x)^\frac{p}{2})^\frac{1}{p}\|=\|(x^\frac{p}{2}(ty)^px^\frac{p}{2})^\frac{1}{p}\|=t\|(x^\frac{p}{2}y^px^\frac{p}{2})^\frac{1}{p}\|
\\
=t\|(\phi(x)^\frac{p}{2}\phi(y)^p\phi(x)^\frac{p}{2})^\frac{1}{p}\|=
\|(\phi(x)^\frac{p}{2}(t\phi(y))^p\phi(x)^\frac{p}{2})^\frac{1}{p}\|.
\end{multline*}
As $\{\phi(x)^\frac{p}{2}\colon x\in \effone\}=\efftwo$, we have $\phi(ty)=t\phi(y)$ by Corollary \ref{nhn}.

    Define $\tilde{\phi}\colon \semione\to \semitwo$ as 
    \begin{equation}
        \tilde{\phi}(a)=
        \begin{cases}
            \|a\|\phi\left(\frac{a}{\|a\|}\right), & a\ne 0
            \\
            0, & a=0.
        \end{cases}
    \end{equation}
    It is easy to see that $\tilde{\phi}$ is a surjection. By the first part, we infer that $\phi=\tilde{\phi}$ on $\effone$.  We also see by a simple calculation that
    \[
    \|(\tilde{\phi}(x)^\frac{p}{2}\tilde{\phi}(y)^p\tilde{\phi}(x)^\frac{p}{2})^\frac{1}{p}\|
    =
    \|(x^\frac{p}{2}y^px^\frac{p}{2})^\frac{1}{p}\|,\quad x,y\in \semione.
    \]
    It follows by Theorem \ref{semi13} that there exists a Jordan $*$-isomorphism $J\colon A_1\to A_2$ such that 
    $\tilde{\phi}=J$ on $\semione$, in particular, on $\effone$. As $\phi=\tilde{\phi}$ on $\effone$, we have the conclusion.
\end{proof}

\subsection*{Acknowledgments}
The authors would like to thank Lajos Moln\'ar for his fascinating lectures at Ritsumeikan University in 2023. We enjoyed them very much. They also appreciate his comments on the first draft which improved the readability of the paper.
The first author was supported by 
JSPS KAKENHI Grant Numbers JP19K03536. 
The second author was supported by JSPS KAKENHI Grant Numbers JP21K13804. 

\end{document}